\def\l{\langle}
\def\r{\rangle}
\def\a{\alpha}
\def\b{\beta}
\def\bb{\mathbb}
\def\cal{\mathcal}
\def\rm{\textrm}
\def\bf{\textbf}
\def\F{\mathrm{Frat}}
\def\soc{\mathrm{soc}}
\def\aut{\mathrm{Aut}}
\DeclareMathOperator{\frat}{Frat}
\begin{document}
\bibliographystyle{amsplain}

\title[Rational probabilistic zeta function]{Rationality of the probabilistic zeta functions\\ of finitely generated
profinite groups}
\author{Duong Hoang Dung}
\address{
Dipartimento di Matematica, Universit\`{a} degli studi di Padova,\\
Via Trieste 63, 35121 Padova, Italy\\
Mathematisch Instituut, Leiden Universiteit,\\ Niels Bohrweg 1, 2333 CA Leiden, The Netherlands}
\email{dhdung1309@gmail.com}
\author{Andrea Lucchini}
\address{Dipartimento di Matematica, Universit\`{a} degli studi di Padova,\\
Via Trieste 63, 35121 Padova, Italy.}
\email{lucchini@math.unipd.it}
\thanks{Research partially supported
by MIUR-Italy via PRIN "Group theory and applications"}
\subjclass[2010]{20E18, 20D06, 20P05, 11M41.}

\maketitle

\begin{abstract}
We prove that if the probabilistic zeta function  $P_G(s)$  of a finitely generated profinite group $G$ is rational and all but finitely many nonabelian composition factors of $G$ are groups of Lie type in a fixed characteristic or sporadic simple groups, then $G$ contains only finitely many maximal subgroups.
\end{abstract}

\newcommand\node[2]{\overset{#1}{\underset{#2}{\circ}}}
\newcommand{\DEF}[1]{{\em #1\/}}

\numberwithin{equation}{section}

\newtheorem{theorem}{\bf Theorem}[section]
\newtheorem{corollary}[theorem]{\bf Corollary}
\newtheorem{lemma}[theorem]{\bf Lemma}
\newtheorem{example}[theorem]{\bf Example}
\newtheorem{proposition}[theorem]{\bf Proposition}
\newtheorem{conjecture}[theorem]{\bf Conjecture}
\newtheorem{problem}[theorem]{\bf Problem}
\newtheorem{remark}[theorem]{\bf Remark}
\newtheorem{definition}[theorem]{\bf Definition}
\newtheorem*{Main1}{Theorem 1}
\newtheorem*{Main2}{Theorem 2}
\newtheorem*{Main3}{Corollary}
\newtheorem{claim}{\bf Claim}
\newtheorem*{my2}{Theorem \ref{my2}}
\newtheorem*{my3}{Theorem \ref{my3}}

\newcommand{\Prob}{\operatorname{Prob}}

\
\section{Introduction}

Let $G$ be a finitely generated profinite group. As $G$ has only
finitely many open  subgroups of a given index, for any $n \in
\mathbb N$ we may define the integer $a_n(G)$ as $a_n(G)=\sum_H
\mu_G(H),$ where the sum is over all open subgroups $H$ of $G$
with $|G:H|=n.$ Here $\mu_G(H)$ denotes the M\"obius function of
the poset of open subgroups of $G,$ which is defined by recursion
as follows: $\mu_G(G)=1$ and $\mu_G(H)=-\sum_{H<K}\mu_G(K)$ if
$H<G$.
 Then
we  associate to $G$ a formal Dirichlet series $P_G(s)$,  defined as
\[P_G(s)=\sum_{n \in \mathbb N}\frac{a_n(G)}{n^s}.\]

Hall in \cite{hall} showed that if $G$ is a finite group and $t$ is a positive integer, then $P_{G}(t)$ is equal to the probability that $t$ random elements of $G$ generate $G$ or in other words $$P_{G}(t)=\Prob
_{G}(t):=\frac{\left|\Omega_{G}(t)\right|}{|G^{t}|},$$ where $\Omega_{G}(t)$ is the set of  generating  $t$-tuples in $G$.
In \cite{pfg} Mann conjectured that $P_G(s)$ has a similar probabilistic meaning for a wide class of profinite groups. More precisely
define $\Prob_{G}(t)=\mu(\Omega_{G}(t))$,  where $\mu$ is the  normalised   Haar measure uniquely defined on the profinite group $G^{t}$ and $\Omega_{G}(t)$ is the set of generating $t$-tuples in $G$ (in the topological sense) and say that $G$ is positively finitely generated
if there exists a positive integer $t$ such that $\Prob_{G}(t)>0.$ Mann conjectured that if $G$ is positively finitely generated, then
$P_{G}(s)$ converges in some right half-plane and $P_{G}(t)=\Prob_{G}(t)$, when $t \in \Bbb N$ is large enough.
The second author proved in \cite{Lu11} that this conjecture is true if $G$ is a profinite group with polynomial subgroup growth.
But even when the convergence is not ensured, the formal Dirichlet series $P_G(s)$ encodes information about
the lattice generated by the maximal subgroups of $G$ and combinatorial properties of the probabilistic sequence $\{a_{n}(G)\}$ reflect
aspects of the structure of $G.$ For example in \cite{moltprof} it is proved that a finitely generated profinite group $G$ is prosolvable if and only if the sequence $\{a_{n}(G)\}$ is multiplicative. Notice that if $H$ is an open subgroup of $G$ and $\mu_G(H)\neq 0,$ then $H$ is an intersection of maximal subgroups of $G.$ This implies in particular that if $G$ contains only finitely many maximal subgroups (i.e. if the Frattini subgroup
$\frat G$ of $G$ has finite index in $G$), then there are only finitely many open subgroups $H$ of $G$ with $\mu_G(H)\neq 0$ and consequently $a_n(G)=0$ for all but finitely many $n\in \Bbb N$ (i.e. $P_G(s)$ is a finite Dirichlet series). A natural question is whether the converse is true.
An affirmative answer has been given in the case of prosolvable groups \cite{DeLu06}. Really a stronger result holds: if $G$ is a finitely generated prosolvable group, then $P_G(s)$ is rational (i.e. $P_G(s)=A(s)/B(s)$ with $A(s)$ and $B(s)$ finite Dirichlet series) if and only if $G/\frat G$ is a finite group. This has been generalized in \cite{DeLu07} to the finitely generated profinite groups with the property that all but finitely many factors in a composition series are either abelian or alternating groups. In this paper we prove two other results of the same nature.

\begin{Main1}\label{uno}
Let $G$ be a finitely generated  profinite group. Assume that there exist a prime $p$ and a normal open subgroup $N$ of $G$ such
that the nonabelian composition factors of $N$ are simple groups of Lie type over fields of characteristic $p$. Then $P_G(s)$ is rational if and only if $G/\F(G)$ is a finite group.
\end{Main1}

\begin{Main2}\label{due}
Let $G$ be a finitely generated  profinite group. Assume that there exists a normal open subgroup $N$ of $G$ such
that the nonabelian composition factors of $N$ are sporadic simple groups. Then $P_G(s)$ is rational if and only if $G/\F(G)$ is a finite group.
\end{Main2}

In particular, if $G$ contains a normal open subgroup $N$ all of whose
nonabelian composition factors are isomorphic, then we may apply the main theorem in \cite{DeLu06} if $N$ is prosolvable,
the main theorem in \cite{DeLu07} if $N$ has a composition factor of alternating type,
Theorem 1 if $N$ has a composition factor of Lie type and Theorem 2 if a sporadic simple groups
appears as a composition factor of $G$ and deduce the following corollary.

\begin{Main3}
Let $G$ be a finitely generated  profinite group. Assume that there exists a normal open subgroup $N$ of $G$ such
that the nonabelian composition factors of $N$ are all isomorphic. Then $P_G(s)$ is rational if and only if $G/\F(G)$ is a finite group.
\end{Main3}

The idea of the proof is the following. In \cite{DeLu03,DeLu06b} it is proved that $P_G(s)$ can be written
as formal product $P_G(s)=\prod_{i}P_i(s)$ of finite Dirichlet series associated with the non-Frattini factors in a chief series of $G$.
On the other hand $G/\frat(G)$ is finite if and only if a chief series of $G$ contains only finitely many non-Frattini factors. So the strategy
is to prove that the product $\prod_{i}P_i(s)$ cannot be rational if it involves infinitely many nontrivial factors. A consequence of
the Skolem-Mahler-Lech Theorem  (see Proposition \ref{skolem theorem}) can help us in this task. However Proposition \ref{skolem theorem} concerns infinite product
of finite Dirichlet series involving only one nontrivial summand, but only the finite Dirichlet series associated to the abelian chief factors
of $G$ have this property, while in general the finite series $P_i(s)$ are quite complicated. So we need to produce suitable  \lq\lq short\rq\rq  \ approximations $P_i^*(s)$ of the series $P_i(s)$, in such a way that the rationality of their product is preserved: the tool to
achieve such approximation is a slight modification of a result already employed in \cite{DeLu07} for a similar purpose (see Proposition \ref{prop 4.3}). This requires a delicate analysis of the subgroup structure of the
almost simple groups of Lie type, based in particular on the  properties of the parabolic subgroups, and some information on the maximal subgroups of the sporadic simple groups.

\section{Infinite products of formal Dirichlet series}
Let $\cal R$ be the ring of formal Dirichlet series with integer
coefficients. We will say that $F(s)\in \cal R$ is rational if there
exist two finite Dirichlet series $A(s), B(s)$ with $F(s)=A(s)/B(s).$

For every set $\pi$ of prime numbers, we consider the ring
endomorphism  of $\cal R$ defined by:
\begin{eqnarray*}
F(s)=\sum_{n\in \bb{N}} \frac{a_n}{n^s}  \mapsto  F^{\pi}(s)=\sum_{n\in \bb{N}} \frac{a_n^*}{n^s}
\end{eqnarray*}
where $a_n^*=0$ if $n$ is divisible by some prime $p\in \pi,$ $a_n^*=a_n$ otherwise.
We will use the following remark:
\begin{remark}\label{pirat}
For every set $\pi$ of prime numbers, if $F(s)$ is rational then $F^\pi(s)$ is rational.
\end{remark}

The following result is a consequence of the Skolem-Mahler-Lech Theorem:
\begin{proposition}\label{skolem theorem}\cite[Proposition 3.2]{DeLu06}
Let $I\subseteq \bb{N}$ and let $q,r_i,c_i$ be positive integers for each $i\in I$. Assume that
\begin{itemize}
\item[(i)] for every $n\in\bb{N}$, the set $\{i\in I \mid r_i~\rm{divides}~n\}$ is finite;
\item[(ii)] there exists a prime $t$ such that $t$ does not divide $r_i$ for any $i\in I.$
\end{itemize}
If the product
$$F(s)=\prod_{i\in I}\left(1-\frac{c_i}{(q^{r_i})^s}\right)$$
is rational, then $I$ is finite.
\end{proposition}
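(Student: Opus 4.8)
The plan is to translate the statement into one about formal power series and then bring in the Skolem-Mahler-Lech theorem. Setting $x=q^{-s}$, every coefficient of $F(s)=\sum_n a_n n^{-s}$ is supported on the powers of $q$ (each monomial appearing in the expansion of $\prod_{i\in I}(1-c_i(q^{r_i})^{-s})$ has the shape $\pm(\prod_{i\in S}c_i)\,(q^{\sum_{i\in S}r_i})^{-s}$), so $F$ corresponds to the formal power series
\[
 G(x)\;=\;\prod_{i\in I}\bigl(1-c_ix^{r_i}\bigr)\ \in\ \mathbb Z[[x]].
\]
Hypothesis (i) guarantees that each positive integer equals $r_i$ for only finitely many $i$, and in fact that $\{i:r_i\le N\}$ is finite for every $N$ (if $r_i\le N$ then $r_i\mid N!$); hence $r_i\to\infty$, the coefficient of $x^m$ in the product is a finite sum, and $G(x)$ is a well-defined element of $\mathbb Z[[x]]$. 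The first step is to show that if $F(s)$ is rational then $G(x)$ is a rational function: writing $F=A/B$ with $A,B$ finite Dirichlet series and comparing, in the identity $A=BF$, the coefficients of $n^{-s}$ at $n=q^m$ for $m$ large, one gets $a_{q^m}(A)=\sum_{k} b_{q^k}(B)\,a_{q^{m-k}}(F)$ with only finitely many $k$ contributing, whence $\sum_k b_{q^k}(B)\,a_{q^{m-k}}(F)=0$ for $m$ large: a linear recurrence for the coefficient sequence of $G$.

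Next I would pass to the logarithmic derivative. If $G$ is rational then so is $xG'(x)/G(x)=\sum_{m\ge1}\psi_m x^m$, and a direct computation from the product gives
\[
 \psi_m\;=\;-\sum_{i\,:\,r_i\mid m} r_i\,c_i^{\,m/r_i},
\]
each coefficient again a finite sum. Hence $(\psi_m)_{m\ge1}$ is a linear recurrence sequence over $\mathbb Z$; equivalently it is an exponential polynomial $\psi_m=\sum_{\ell}P_\ell(m)\lambda_\ell^m$ for $m$ large, with a fixed finite list of nonzero characteristic roots $\lambda_\ell$ determined by the denominator of $xG'/G$. (Working with the logarithmic derivative rather than $G$ itself is convenient because each factor $1-c_ix^{r_i}$ then contributes one transparent geometric summand.) Now Skolem-Mahler-Lech applies: the zero set $\{m:\psi_m=0\}$ is the union of a finite set and finitely many arithmetic progressions; note moreover that, since every $c_i$ is a \emph{positive} integer, $\psi_m\le 0$, with $\psi_m=0$ precisely when no $r_i$ divides $m$.

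Finally I would derive a contradiction from $|I|=\infty$, and this is where hypothesis (ii) enters. If $I$ is infinite then, by (i), the set $R=\{r_i:i\in I\}$ is infinite. Sampling the linear recurrence $(\psi_m)$ along an arithmetic progression produces again a linear recurrence, and since the prime $t$ divides no $r_i$, restricting $m$ to residue classes modulo suitable powers of $t$ decouples the divisibility conditions $r_i\mid m$ from the $t$-part of $m$; this should let one strip off the contributions of the families $\{i:r_i=r\}$ one value $r$ at a time and compare the resulting exponential-polynomial identities against the fixed finite list $\{\lambda_\ell\}$, forcing the ``size'' of $I$ to be bounded in terms of the order of the recurrence. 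The main obstacle is precisely this last step: because the constants $c_i$ need not be distinct (they may all equal $1$), the zero set alone does not detect the failure of rationality, so one must exploit the full exponential-polynomial structure while untangling the arithmetic of the relations $r_i\mid m$ — and hypothesis (ii) is exactly the hook that makes enough of those relations collapse for the Skolem-Mahler-Lech periodicity to bite.
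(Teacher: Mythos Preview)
First, note that the paper does not itself prove this proposition --- it is quoted verbatim from \cite{DeLu06} --- so there is no in-paper argument to compare against. That said, your translation of the problem is sound: passing from the Dirichlet series $F(s)$ to the power series $G(x)=\prod_i(1-c_ix^{r_i})$ via $x=q^{-s}$ is correct, your argument that rationality of $F$ forces the coefficient sequence of $G$ (and hence of $xG'/G$) to satisfy a linear recurrence is fine, and the formula $\psi_m=-\sum_{r_i\mid m}r_ic_i^{m/r_i}$ is right. You are also right that the positivity of the $c_i$ is essential: with $c_i=-1$ one has $\prod_{k\ge0}(1+x^{2^k})=(1-x)^{-1}$, a rational infinite product satisfying both (i) and (ii).

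The genuine gap is your final paragraph, which is not a proof but a description of what a proof ought to accomplish. You correctly observe that the Skolem--Mahler--Lech description of the zero set of $(\psi_m)$ is by itself insufficient --- e.g.\ if $\{r_i\}$ is the set of all primes then every $m>1$ is hit and the zero set is $\{1\}$, perfectly compatible with SML. But you do not supply the missing step: the phrases ``this should let one strip off the contributions \ldots\ one value $r$ at a time'' and ``forcing the `size' of $I$ to be bounded in terms of the order of the recurrence'' are programmatic, and your appeal to hypothesis~(ii) is not made precise. In fact the heuristic you offer is misleading: since $\gcd(r_i,t)=1$, knowing $m$ modulo a power of $t$ tells you nothing about whether $r_i\mid m$, so restricting to residue classes mod $t^k$ does \emph{not} ``decouple the divisibility conditions $r_i\mid m$ from the $t$-part of $m$'' in any useful sense. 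You have identified the right framework and the right difficulty, but the crucial bridge from ``$(\psi_m)$ is a linear recurrence sequence'' to ``$\{r_i\}$ is finite'' is simply absent; for that you should consult the argument in \cite{DeLu06}.
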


The following slight modification of Proposition 4.3 in \cite{DeLu07} can be proved exactly in the same way and will play a significant  role in our arguments.
\begin{proposition}\label{prop 4.3}
Let $F(s)$ be a product of finite Dirichlet series $F_i(s)$, indexed over a subset $I$ of $\mathbb N$:
$$F(s)=\prod_{i\in I}F_i(s),~\textrm{where}~ F_i(s)=\sum_{n\in\bb{N}}\frac{b_{i,n}}{n^s}$$
Let $q$ be a prime and $\Lambda$ the set of positive integers divisible by $q$. Assume that there exists a positive integer $\alpha$ and a set $\{r_i\}_{i\in I}$  of positive integers such that if $n\in\Lambda$ and $b_{i,n}\ne 0$ then $n$ is an $r_i$-th power of some integer and $v_q(n)=\a r_i$ (where $v_q(n)$ is the $q$-adic valuation of $n$).  Define
$$w=\min\{x\in\bb{N}~\big|~ v_q(x)=\a~\textrm{and}~b_{i,x^{r_i}}\ne 0~\textrm{for some}~i\in I\}.$$
If $F(s)$ is rational, then the product
$$
F^*(s)=\prod_{i\in I}\left(1+\frac{b_{i,w^{r_i}}}{(w^{r_i})^s}\right)
$$
is also rational.
\end{proposition}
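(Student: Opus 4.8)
We sketch the argument; it follows that of \cite[Proposition~4.3]{DeLu07} with only cosmetic changes, the extra exponent $\alpha$ playing no essential role.

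The plan rests on a handful of operations on $\mathcal{R}$ that preserve rationality: the $\pi$-operations $F\mapsto F^{\pi}$ of Remark~\ref{pirat}; the formation of $\mathbb{Z}$-linear combinations, products and quotients $A/B$ of rational series (the last when $B$ has constant term $1$); and the monomial substitutions $\sigma_{\rho}\colon\sum_{n}a_{n}n^{-s}\mapsto\sum_{n}a_{n}\rho(n)^{-s}$ attached to completely multiplicative maps $\rho\colon\mathbb{N}\to\mathbb{N}$, whenever $\sigma_{\rho}$ is defined on the series at hand (if $F=A/B$ then $\sigma_{\rho}(F)=\sigma_{\rho}(A)/\sigma_{\rho}(B)$, the images of $A$ and $B$ being obtained by applying $\rho$ to their finitely many indices; for a series in finitely many primes it is enough that $\rho(p)>1$ on them). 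I will also use two elementary facts: that $(A/B)^{\pi}=A^{\pi}/B^{\pi}$, obtained by applying the ring endomorphism $(\cdot)^{\pi}$ to $A=B\cdot(A/B)$; and that a formal Dirichlet series supported on the powers of a single integer $m>1$ is rational precisely when the power series $\sum_{k}d_{k}y^{k}$ obtained from it by setting $y=m^{-s}$ is a rational function of $y$ --- so that, in particular, whether a product $\prod_{i}\bigl(1+d_{i}(m^{r_{i}})^{-s}\bigr)$ is rational does not depend on the integer $m$.

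Write $w=q^{\alpha}w'$ with $q\nmid w'$, and take the $F_{i}$ to have constant term $1$, as we may. The first step is to apply $(\cdot)^{\pi}$ with $\pi$ the set of primes \emph{not} dividing $w$: then $F^{\pi}=\prod_{i}F_{i}^{\pi}$ is rational, each $F_{i}^{\pi}$ is supported on integers all of whose prime factors divide $w$, and by the hypothesis the $q$-divisible part of $F_{i}^{\pi}$ is a sum of monomials $b_{i,(q^{\alpha}e)^{r_{i}}}\bigl((q^{\alpha}e)^{r_{i}}\bigr)^{-s}$ with $e$ a $q$-free integer whose prime factors divide $w'$; the minimality of $w$ forces every such monomial with $e<w'$ to vanish, so this part equals $b_{i,w^{r_{i}}}(w^{r_{i}})^{-s}$ plus a ``tail'' supported on integers strictly larger than $w^{r_{i}}$ and of $q$-valuation $\alpha r_{i}$. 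Dividing $F^{\pi}$ by its $q$-free part $(F^{\pi})^{\{q\}}$ (rational by Remark~\ref{pirat}, of constant term $1$), and using that $(\cdot)^{\{q\}}$ is a ring endomorphism, we obtain a rational product $\prod_{i}(1+G_{i})$ in which $G_{i}=b_{i,w^{r_{i}}}(w^{r_{i}})^{-s}+D_{i}$ and the debris $D_{i}$ is supported on integers $q^{\alpha r_{i}}d$, where $d>(w')^{r_{i}}$ has all its prime factors among those of $w'$. If $w'=1$, i.e.\ $w=q^{\alpha}$ is a power of $q$, then already $F_{i}^{\pi}=1+b_{i,w^{r_{i}}}(w^{r_{i}})^{-s}$, hence $F^{\pi}=F^{*}$ and we are done; this is the clean situation that underlies the whole result.

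It remains to strip off the debris $D_{i}$, and this is where the real work lies. I would argue by induction on the number $t$ of primes dividing $w'$, the case $t=0$ being the one just settled. For the inductive step one fixes a prime $p\mid w'$ and applies a carefully chosen monomial substitution $\sigma_{\rho}$ --- fixing $q$ and the remaining primes of $w$, introducing a fresh prime $\ell$ whose exponent $v_{\ell}(\sigma_{\rho}(n))$ records how $v_{p}(n)$ stands relative to $v_{q}(n)$, and carrying $w$ to an integer of the same shape but with $p$ absent (equivalently, relative to $w/p^{v_{p}(w)}$) --- followed by the $\pi$-operation $(\cdot)^{\{\ell\}}$ and the change of variable noted above. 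When $\rho$ is chosen correctly the image of $w^{r_{i}}$, and of each cross-product $\prod_{i\in J}w^{r_{i}}$ occurring in the product, is $\ell$-free, while the superfluous monomials of the debris --- in particular the ``near misses'' given by the other powers of $w'$ --- become divisible by $\ell$ and are killed by $(\cdot)^{\{\ell\}}$; one is then left with a rational product of exactly the same form as $F$ but with one fewer prime dividing the relevant modulus, to which the inductive hypothesis applies. The main obstacle is precisely the construction of $\rho$: it must at once be defined on the series at hand, have image within the ring of \emph{ordinary} Dirichlet series (so that $(\cdot)^{\{\ell\}}$ is really a ring endomorphism there), and separate all the monomials that must survive from the part of the debris that is to be deleted. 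This last point is the combinatorial heart of \cite[Proposition~4.3]{DeLu07}; being wholly insensitive to the value of $\alpha$, the argument given there transfers verbatim, and the proof is complete.
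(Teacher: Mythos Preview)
Your proposal is correct and aligns with the paper's treatment: the paper does not give a proof of this proposition but simply states that it is a slight modification of \cite[Proposition~4.3]{DeLu07} and can be proved in exactly the same way. Your sketch, which outlines the reduction via $\pi$-operations and monomial substitutions and then defers the combinatorial core to \cite{DeLu07}, is therefore in full agreement with --- and in fact more detailed than --- what the paper provides.
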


\section{Preliminaries and notations}\label{notation}
Let $G$ be a finitely generated profinite group and let $\{G_i\}_{i\in\bb{N}}$  be a fixed countable descending series of open normal subgroups with the property that $G_0=G$, $\cap_{i\in\bb{N}}G_i=1$ and $G_i/G_{i+1}$ is a chief factor of $G/G_{i+1}$ for each $i\in\bb{N}$. In particular, for
each $i \in \bb{N}$, there exist a simple group $S_i$ and a positive integer $r_i$ such that $G_i/G_{i+1}\cong S_i^{r_i}.$
Moreover, as described in \cite{DeLu06b}, for each $i\in \bb{N}$, a finite
Dirichlet series
\begin{equation}\label{eq-pi}
P_i(s)=\sum_{n\in \bb{N}}\frac{b_{i,n}}{n^s}
\end{equation}
is associated with the chief factor  $G_i/G_{i+1}$ and $P_G(s)$
can be written as an infinite formal product of the finite Dirichlet series $P_i(s)$:
\begin{equation}P_G(s)=\prod_{i\in \bb{N}}P_i(s).
\end{equation}
Moreover, this factorization is independent of the choice of chief series (see \cite{DeLu03,DeLu06b}) and $P_i(s)=1$ unless $G_i/G_{i+1}$ is a non-Frattini chief factor of $G$.

We recall some properties of the series $P_i(s).$ If $S_i$ is cyclic of order $p_i,$
then $P_i(s)=1-c_i/(p_i^{r_i})^s,$
where $c_i$ is the number of complements of $G_i/G_{i+1}$ in
$G/G_{i+1}.$ It is more difficult to compute the series $P_i(s) $
when $S_i$ is a non-abelian simple group. In that case an important
role is played by the group $L_i=G/C_G(G_i/G_{i+1}).$ This is a
monolithic primitive group and its unique minimal normal subgroup
is isomorphic to $G_i/G_{i+1}\cong S_i^{r_i}.$ If $n \neq
|S_i|^{r_i},$ then  the coefficient $b_{i,n}$ in (\ref{eq-pi}) depends only on $L_i;$ more precisely we have
\begin{equation*}
b_{i,n}=\sum_{\substack{|L_i:H|=n\\L_i=H \soc(L_i)}}\mu_{L_i}(H).
\end{equation*}

It is not easy to compute these coefficients $b_{i,n}$ even for  $n \neq
|S_i|^{r_i}.$
Some help comes from the knowledge of the subgroup $X_i$ of $\aut
S_i$ induced by the conjugation action of the normalizer in $L_i$ of
a composition factor of the socle $S_i^{r_i}$ (note that $X_i$ is an
almost simple group with socle isomorphic to $S_i).$
More precisely, given an almost simple group
$X$ with socle $S$, we can consider the following finite Dirichlet series:
\begin{equation}\label{pxs}P_{X,S}(s)=\sum_{n}\frac{c_n(X)}{n^s},~\textrm{\ where $c_n(X)=\sum_{\substack{|X:H|=n\\X=SH}}\mu_X(H)$.}
\end{equation}
\begin{lemma}\label{paz}\cite[Theorem 5]{Ser08}.
Let $S_i$ be a nonabelian  simple group and let  $\pi$ be a set of primes containing at least one divisor of $|S_i|$.
If n is not divisible by $\vert S_i\vert $ and $b_{i,n}\not= 0$, then there exists $m\in \Bbb N$ with $n=m^{r_i}$ and $b_{i,n}=c_m(X_i)\cdot m^{r_i-1}.$
This implies $$P_i^{\pi}(s)= P^{\pi}_{X_i,S_i}(r_is-r_i+1).$$
\end{lemma}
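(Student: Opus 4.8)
The statement has two ingredients: the coefficient formula for $b_{i,n}$ and the resulting equality of Dirichlet series, and since the second is a purely formal consequence of the first, the plan is to concentrate on the coefficient formula and then read off the series identity. Throughout I would work inside the monolithic primitive group $L:=L_i$: write $N:=\soc(L)\cong S^{r}$ with $S:=S_i$ nonabelian simple and $r:=r_i$, let $S_1,\dots,S_r$ be the simple direct factors of $N$, and let $X:=X_i\le\aut S$ be the almost simple group induced on $S_1$ by its normalizer $N_L(S_1)$. Since $|S_i|\nmid n$ we have $n\ne|S_i|^{r_i}$, so the description of $b_{i,n}$ recalled above applies and
\[
b_{i,n}=\sum_{\substack{|L:H|=n\\ HN=L}}\mu_L(H).
\]
For a supplement $H$ of $N$ in $L$ put $D:=H\cap N$, so that $|L:H|=|N:D|$ and $H/D\cong L/N$; everything thus reduces to understanding which subgroups $D\le N$ occur as $H\cap N$, and with what Möbius weights.

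First I would unravel the structure of $D$ by applying Goursat's Lemma to $N=S_1\times\cdots\times S_r$. There is a coarsest partition of $\{1,\dots,r\}$ into ``blocks'' relative to which $D$ is the direct product of its projections onto the subproducts over the blocks. Since $S$ is simple and $H$ both normalizes $D$ and permutes the factors transitively, all blocks have the same size $\ell\mid r$, the block components of $D$ are pairwise $\aut(S^{\ell})$-conjugate, and each has all coordinate projections equal (up to $\aut S$-conjugacy) to a fixed $Y\le S$ of index $m:=|S:Y|$. Hence $|N:D|=\bigl(|S^{\ell}:D_B|\bigr)^{r/\ell}$, where $D_B$ is a block component. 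The case $\ell=1$ is exactly $D=Y_1\times\cdots\times Y_r$ with each $Y_j$ of index $m$, and then $|N:D|=m^{r}$; I shall call these supplements \emph{untwisted}. When $\ell\ge2$ the index $\bigl(|S^{\ell}:D_B|\bigr)^{r/\ell}$ is only forced to be an $(r/\ell)$-th power, and in general not an $r$-th power; in particular every supplement whose index is not an $r$-th power is of this twisted kind.

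The proof then splits into a main term and a cancellation. For the untwisted supplements of index $m^{r}$ I would set up a correspondence with the subgroups $\bar H\le X$ satisfying $X=S\bar H$ and $|X:\bar H|=m$, via restriction of $N_H(S_1)$ to $S_1$ (using that $N_L(S_1)=N_H(S_1)\,N$, so $\bar H:=$ the image of $N_H(S_1)$ in $\aut S_1$ satisfies $X=S\bar H$ and $\bar H\cap S=Y_1$); a bookkeeping of normalizers and of the cohomology controlling the reconstruction of the top part $H/D\cong L/N$ then shows that the $\mu_L$-weighted count of the untwisted $H$ lying above a fixed datum $\bar H$ equals $\mu_X(\bar H)\,m^{r-1}$, the factor $m^{r-1}$ accounting for the ways of spreading the datum over the remaining $r-1$ coordinates. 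Summing over $\bar H$ yields the untwisted contribution $c_m(X)\,m^{r-1}$. For the twisted supplements I would show that, for each fixed index, their Möbius weights cancel: the twisted supplements of a given index organize into families indexed by the ``linking'' automorphisms gluing the blocks together, and one exhibits on each family the sign-reversing symmetry making $\sum_H\mu_L(H)=0$ — this is exactly the reduction isolated in \cite{Ser08}, to the effect that only the product-type supplements survive in $b_{i,n}$ once $|S_i|\nmid n$. Combining the two parts: $b_{i,n}=0$ unless $n$ is an $r$-th power, and $b_{i,m^{r}}=c_m(X)\,m^{r-1}$.

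Finally the series identity is immediate. The substitution $s\mapsto r_is-r_i+1$ turns $P_{X,S}(s)=\sum_m c_m(X)/m^{s}$ into $\sum_m c_m(X)\,m^{r-1}/(m^{r})^{s}$, and it commutes with the operator $(\,\cdot\,)^{\pi}$ since $m^{r}$ is divisible by a prime of $\pi$ exactly when $m$ is. Now $(\,\cdot\,)^{\pi}$ annihilates every coefficient of $P_i(s)$ of index divisible by a prime of $\pi$ — in particular all those of index divisible by $|S_i|$, since some prime of $\pi$ divides $|S_i|$, which disposes of the one index $|S_i|^{r_i}$ not covered by the coefficient formula — so what remains of $P_i^{\pi}(s)$ are exactly the terms $c_m(X)\,m^{r-1}/(m^{r})^{s}$ with $m$ coprime to $\pi$, i.e.\ $P_i^{\pi}(s)=P_{X_i,S_i}^{\pi}(r_is-r_i+1)$. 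The main obstacle is the cancellation for the twisted supplements: the individual values $\mu_L(H)$ need not vanish, so one genuinely needs a Möbius-theoretic argument valid uniformly over every monolithic primitive $L$ with socle $S^{r}$ — this is the technical core of the result and the point where \cite{Ser08} is invoked.
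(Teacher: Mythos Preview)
The paper does not prove this lemma; it is recorded as a direct citation of \cite[Theorem~5]{Ser08} with no argument supplied. Your proposal is therefore not being measured against a proof in the paper but against the content of the cited result, and you are explicit that the technical core is precisely what you draw from \cite{Ser08}.

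The architecture of your sketch is sound. The reduction to $D=H\cap N\le S^{r}$ via $|L:H|=|N:D|$, the use of the transitivity of $H$ on the simple factors to force all coordinate projections of $D$ to have a common index $m$, and the formal derivation of the series identity from the coefficient formula are all correct. Your handling of the passage to $(\cdot)^{\pi}$ is right: since some $p\in\pi$ divides $|S_i|$, every index $n$ with $|S_i|\mid n$ satisfies $p\mid n$ and is annihilated, so all surviving indices fall under the coefficient formula. (Your phrase ``the one index $|S_i|^{r_i}$'' is slightly loose---the hypothesis of the lemma excludes every $n$ with $|S_i|\mid n$, not only $|S_i|^{r_i}$---but the argument is unaffected, since all such $n$ are killed by $(\cdot)^{\pi}$.) The substitution calculation $P_{X,S}(rs-r+1)=\sum_{m}c_m(X)\,m^{r-1}/(m^{r})^{s}$ and its compatibility with $(\cdot)^{\pi}$ are also correct.

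What remains genuinely unproved in your sketch is exactly what you flag: the M\"obius-weighted enumeration over the product-type supplements yielding $c_m(X_i)\,m^{r_i-1}$, together with the vanishing of any residual ``twisted'' contribution at indices not divisible by $|S_i|$. You defer this to \cite{Ser08}, which is precisely what the paper does. In effect your proposal and the paper make the same appeal to the same source; you have simply unpacked the surrounding reductions that the paper leaves implicit in its one-line citation.
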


We will give now a description of the finite Dirichlet series $P^{\{p\}}_{X,S}(s)$ when $S$ is a simple group of Lie type over a field of characteristic $p$ and $X$ is an almost simple group with socle $S$. We follow the notations from \cite{Carter}.
Recall that a simple group of Lie type $S$ is the subgroup $A^F$ of fixed points under a Frobenius map $F$ of a connected reductive algebraic group $A$ defined over an algebraically closed field of characteristic $p>0$. In particular, $S$ is defined over a field $\bb{K}=\bb{F}_q$ of characteristic $p$. As explained in \cite[3.4]{Carter} a Dynkin diagram can be associated to the simple group $S$ and to the corresponding Lie algebra; moreover
(see \cite[13.3]{Carter}) to the map $F$, a symmetry $\rho$ on the Dynkin diagram of $A^F$ is associated ($\rho$ is trivial in the untwisted case). Let $I:=\{\mathcal{O}_1,\cdots,\mathcal{O}_k\}$ be the set of the $\rho$-orbits on the nodes of the Dynkin diagram. For every subset $J\subseteq I$, let $J^*:=\cup_{i\in J}\cal{O}_i$ be a $\rho$-stable subset of the set of nodes of the Dynkin diagram and one may associate an $F$-stable parabolic subgroup $P_J$ of $S$ with $J^*$. As described in {\cite[Chapter 9]{Carter}}, we may associate to $J$
a polynomial $T_{W_J}(x)$ with the property that $T_{W_J}(q)=|P_J|$. More precisely, in the notations of \cite[9.4]{Carter},
$T_{W_J}(x)=\sum_{w\in W_J}x^{l(w)}.$
 We have that:
\begin{theorem}\label{reduction of P_GX}\cite[Theorem 17]{Patassini}
Let $S$ be a simple group of Lie type defined over a field $\bb{K}=\bb{F}_q$ of characteristic $p$ and $X$ an almost simple group with socle $S$. 
Then $$P_{X,S}^{\{p\}}(s)=(-1)^{|I|}\sum_{J\subseteq I}(-1)^{|J|}\left(\frac{T_{W}(q)}{T_{W_J}(q)}\right)^{1-s}.$$
In particular, if $X$ does not contain non-trivial graph automorphisms, then  $$P^{\{p\}}_{X,S}(s)=P_S^{\{p\}}(s)$$
\end{theorem}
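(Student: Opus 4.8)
The plan is to evaluate $P_{X,S}^{\{p\}}(s)$ directly from \eqref{pxs}, by determining exactly which subgroups survive the truncation at $p$ and how their M\"obius numbers are organised. First observe that if $X=SH$ then $|X:H|=|S:S\cap H|$, so a subgroup $H$ contributes to a coefficient $c_n(X)$ with $p\nmid n$ only if $p\nmid|S:S\cap H|$, that is, only if $S\cap H$ contains a Sylow $p$-subgroup $U$ of $S$. Moreover the family of subgroups $H\le X$ with $p\nmid|X:H|$ is an up-set in the subgroup lattice of $X$, so the numbers $\mu_X(H)$ attached to these $H$ may be computed entirely inside that up-set; thus everything is controlled by the poset of subgroups of $S$ containing $U$ and by the action of $X/S$ on it.

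Consider first the case $X=S$. Let $B=N_S(U)$ be the Borel subgroup corresponding to $U$; the parabolic subgroups of $S$ containing $B$ are precisely the $P_J$ with $J\subseteq I$ (where $I$ is the set of orbits of the Dynkin-diagram symmetry attached to $F$), they satisfy $P_J\le P_K\iff J\subseteq K$, $P_\varnothing=B$, $P_I=S$, and every subgroup of $S$ containing a parabolic is again parabolic. The crucial point is that the numerous non-parabolic subgroups $H$ with $U\le H<S$ (for instance the subgroups strictly between $U$ and $B$) contribute nothing: one shows that $\mu_S(H)=0$ whenever $U\le H\le S$ and $H$ is not parabolic. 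Granting this, only the (conjugates of the) $P_J$ remain; since the overgroups of $P_J$ in $S$ are exactly the $P_K$ with $J\subseteq K\subseteq I$, the interval $[P_J,S]$ is a Boolean lattice and $\mu_S(P_J)=(-1)^{|I|-|J|}$, while $P_J$ is self-normalising and hence accounts for its $|S:P_J|$ conjugates, each of index $|S:P_J|=T_W(q)/T_{W_J}(q)$ (an integer prime to $p$, as it must be). Therefore
\[
P_S^{\{p\}}(s)=\sum_{J\subseteq I}(-1)^{|I|-|J|}\,|S:P_J|\cdot|S:P_J|^{-s}
=(-1)^{|I|}\sum_{J\subseteq I}(-1)^{|J|}\left(\frac{T_W(q)}{T_{W_J}(q)}\right)^{1-s}.
\]

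For a general almost simple $X$ with socle $S$ one runs the same argument inside $X$: a subgroup $H$ with $X=SH$ and $p\nmid|X:H|$ has $H\cap S\supseteq U^x$ for some $x$ and normalises $H\cap S$, and a Frattini-type reduction (the same device that in Section~\ref{notation} passes from $P_G(s)$ to the data of the monolithic quotient) shows that only the $H$ with $H\cap S$ parabolic contribute. If $X$ induces no nontrivial graph automorphism, the assignment $H\mapsto H\cap S$ matches the $X=S$ computation term by term and yields $P_{X,S}^{\{p\}}(s)=P_S^{\{p\}}(s)$. If $X$ does induce graph automorphisms, these act on the nodes of the Dynkin diagram; since $N_X(P_J)$ meets the nontrivial cosets of $S$ in $X$ exactly when $J$ is stable under that action, $H\cap S$ ranges over the parabolics $P_J$ with $J$ stable, and reindexing by the orbits $I$ of the symmetry of the diagram induced by $X$ recovers the stated formula. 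The main obstacle is precisely the vanishing of $\mu_S(H)$ for non-parabolic $p'$-index subgroups $H$ of $S$ (and its counterpart inside $X$): this is where the fine structure of the groups of Lie type enters, through the Bruhat decomposition. One proves it by induction on $|S:H|$, analysing the interval $[H,S]$ and using that $B$ is soluble and self-normalising and that the parabolic overgroups of such an $H$ sit on top of it in a way that collapses its M\"obius number; once this is available, the rest is the combinatorics of the Coxeter complex together with routine bookkeeping for the outer automorphisms.
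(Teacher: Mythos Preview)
The paper does not prove this statement: it is quoted from \cite[Theorem~17]{Patassini} and used thereafter as a black box. So there is no in-paper argument to compare against; what you have written is essentially an outline of how the cited result is established, and as such it is on the right track.

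For $X=S$ your sketch is correct. The $p'$-index subgroups form an up-set, so $\mu_S$ restricted to them is computed internally; the interval $[P_J,S]$ is Boolean on $I\setminus J$, giving $\mu_S(P_J)=(-1)^{|I\setminus J|}$; and self-normalisation of $P_J$ yields $|S:P_J|=T_W(q)/T_{W_J}(q)$ conjugates, which assembles to the displayed formula. You correctly isolate the vanishing $\mu_S(H)=0$ for non-parabolic $H\supseteq U$ as the crux. Your proposed attack (induction on the index, solubility and self-normalisation of $B$) is too vague to carry this, though: the actual mechanism is that every overgroup of $U$ has a \emph{unique minimal} parabolic overgroup $\langle H,B\rangle$ (because any parabolic containing $U$ already contains $B$, and $P_J\cap P_K=P_{J\cap K}$), and when $H$ is not parabolic this lets one contract the open interval $(H,S)$. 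That is where the Bruhat decomposition and BN-pair axioms are really used in Patassini's argument.

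Your treatment of general $X$ is essentially right and in fact clarifies something the paper leaves implicit: in the displayed formula $I$ must be read as the set of orbits on the nodes under $\rho$ \emph{together with the graph automorphisms present in $X$}, not under $\rho$ alone. This is exactly why the Appendix gives two different polynomials for $S=\mathrm{PSL}_4(2)$ (with the graph automorphism $|I|=2$, giving four terms; without it $|I|=3$, giving eight). With that reading, the supplements $H$ of $S$ in $X$ with $p\nmid|X:H|$ and $\mu_X(H)\ne 0$ are precisely the $N_X(P_J)$ for $X$-stable $J$, and the Boolean count goes through; the ``in particular'' clause then records that absent graph automorphisms the orbit set for $X$ agrees with that for $S$.
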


For later use we need to recall definitions and results concerning
Zsigmondy primes.
\begin{definition} Let $n\in \mathbb N$ with $n>1.$ A prime number $p$ is called a
{\em{primitive prime divisor}} of $a^n-1$ if it divides $a^n-1$
but it does not divide $a^e-1$ for any integer $1 \leq e \leq n-1$.
\end{definition}
The following theorem is due to  K. Zsigmondy \cite{Zsi}:
\begin{theorem}[Zsigmondy's Theorem]\label{zig}
 Let $a$ and $n$ be integers greater than 1. There exists a primitive
 prime divisor of  $a^n-1$ except exactly in the following cases:
\begin{enumerate}
\item $n=2,\, a=2^s-1$, where $s
\geq 2$. \item $n=6,\, a=2$.
\end{enumerate}
\end{theorem}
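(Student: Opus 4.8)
The plan is to reduce the statement to properties of cyclotomic polynomials. Write $\Phi_d(x)$ for the $d$-th cyclotomic polynomial, so that $a^n-1=\prod_{d\mid n}\Phi_d(a)$, and note that a prime $q$ is a primitive prime divisor of $a^n-1$ exactly when the multiplicative order of $a$ modulo $q$ equals $n$. The first step is the elementary lemma: if $q\mid\Phi_n(a)$ and $q\nmid n$, then $\mathrm{ord}_q(a)=n$. Indeed $q\mid a^n-1$ forces $\mathrm{ord}_q(a)=e$ with $e\mid n$; if $e<n$ then $a$ is a common zero modulo $q$ of $\Phi_e$ and $\Phi_n$, hence a repeated root of $x^n-1$ modulo $q$, which forces $q\mid n$, a contradiction. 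Thus $a^n-1$ has a primitive prime divisor if and only if $\Phi_n(a)$ has a prime factor that does not divide $n$.

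The second step is to pin down the prime factors $q$ of $\Phi_n(a)$ that do divide $n$. Writing $d=\mathrm{ord}_q(a)$, one has $d\mid q-1$, and a standard argument (analysing $\Phi_{dq^j}(a)$ together with a lifting-the-exponent computation) shows that $n/d$ is a power of $q$ and that $v_q(\Phi_n(a))=1$ whenever $n\ge 3$. Since every prime divisor of $d$ then divides $q-1$ and is therefore smaller than $q$, it follows that $q$ must be the largest prime divisor of $n$; in particular at most one prime divisor of $n$ can divide $\Phi_n(a)$, and then only to the first power. Combining with the first step: for $n\ge 3$, if $a^n-1$ has no primitive prime divisor then $\Phi_n(a)\in\{1,q\}$ with $q$ the largest prime factor of $n$, and in particular
\[
\Phi_n(a)\le n .
\]

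The third step is a size estimate, after which only finitely many pairs survive. The case $n=2$ is treated directly: a prime dividing $a^2-1$ but not $a-1$ is exactly an odd prime divisor of $a+1$, so $a^2-1$ has no primitive prime divisor precisely when $a+1$ is a power of $2$, i.e. $a=2^s-1$ with $s\ge 2$; this is the first exception. For $n\ge 3$ I would write $\Phi_n(a)=\prod_{\gcd(k,n)=1}|a-\zeta_n^k|$ with $\zeta_n=e^{2\pi i/n}$. When $a\ge 3$ this already gives $\Phi_n(a)\ge(a-1)^{\varphi(n)}\ge 2^{\varphi(n)}>n$ for all $n\ge 3$ with $n\notin\{4,6\}$, while the cases $a\ge 3$, $n\in\{4,6\}$ are disposed of by the explicit identities $\Phi_4(a)=a^2+1$ and $\Phi_6(a)=a^2-a+1$ together with the squarefreeness statement above, which shows a primitive prime divisor always exists there. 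When $a=2$ the bound $(a-1)^{\varphi(n)}=1$ is useless, so one instead extracts a sharper lower bound for $\Phi_n(2)$, by pairing each primitive $n$-th root with its complex conjugate and controlling the contribution of the two roots lying closest to $1$, strong enough to give $\Phi_n(2)>n$ for all $n$ beyond a small explicit bound, and then checks the remaining small values of $n$ by hand. That check produces exactly one failure, $\Phi_6(2)=3<6$, which is the second exception $(n,a)=(6,2)$; in every other case $\Phi_n(a)>n$ contradicts $\Phi_n(a)\le n$, so a primitive prime divisor exists.

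I expect the main obstacle to be precisely this final size estimate for $a=2$: the naive bound collapses, the inequality $\Phi_n(2)>n$ is genuinely tight (it barely holds for $n=10$ and $n=12$) and in fact fails for $n=6$, so the estimate must be sharp enough to survive the tight cases while still reducing the problem to a finite verification. A secondary delicate point is the $q=2$ part of the second step, where one must separate the anomalously large power of $2$ that can occur in $\Phi_2(a)=a+1$ from the tame behaviour $v_2(\Phi_n(a))\le 1$ valid for $n\ge 3$.
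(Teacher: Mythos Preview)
The paper does not prove Zsigmondy's Theorem at all: it is quoted as a classical result and attributed to Zsigmondy's 1892 paper \cite{Zsi}, with no argument given. So there is no ``paper's own proof'' to compare against.

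Your outline is the standard modern proof via cyclotomic polynomials, and it is correct. The three ingredients you isolate --- (i) a prime $q\mid\Phi_n(a)$ with $q\nmid n$ has $\mathrm{ord}_q(a)=n$; (ii) if $q\mid\Phi_n(a)$ and $q\mid n$ then $q$ is the largest prime factor of $n$ and, for $n\ge 3$, $v_q(\Phi_n(a))=1$; (iii) the lower bound $\Phi_n(a)>n$ outside a short finite list --- are exactly what is needed, and you have correctly located the only genuine difficulty: for $a=2$ the crude estimate $(a-1)^{\varphi(n)}$ collapses, and one must use a refined bound (pairing conjugate roots, or equivalently the inequality $\Phi_n(2)\ge 2^{\varphi(n)}/\prod_{1<d\mid n}\Phi_d(2)^{-1}$-type estimates) that is tight at $n=6$ and barely passes at $n=10,12$. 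Your handling of $n=2$ and of the $q=2$ anomaly in step (ii) is also accurate. There is nothing to add on the paper's side; this theorem is simply imported.
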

Observe that there may be more than one primitive prime divisor of
$a^n-1$; we denote by $\langle a,\, n \rangle$ the set of these
primes.

Let $p$ be a prime, $r$ a prime distinct from $p$ and $m$ an integer which is not a power of $p$. We define:
$$\begin{aligned}
\zeta_p(r)&=\min\{z\in \Bbb N \mid z\geq 1 \text{ and } p^z\equiv 1\!\!\!\! \mod r\},\\\zeta_p(m)&=\max\{\zeta_p(r) \mid r \text { prime }, r \neq p, r|m\}.
\end{aligned}$$
The value of $\zeta_p(S):=\zeta_p(|S|)$ when $S$ is a simple group of Lie type over $\bb{F}_q$ and $q=p^f$ is given in
in \cite[Table 5.2.C]{Lie90}.

\begin{proposition}\label{remark reduction 1}
Let $X$ be an almost simple group with socle $S$, where $S$ is a simple group of Lie type defined over a field of characteristic $p$. Assume that
$\zeta_p(S)>1$ and $\zeta_p(S)>6$ if $p=2.$ Let $\tau \in \l p,\zeta_p(S)\r$. Consider the Dirichlet series
$$P_{X,S}^{\{p\}}(s)=\sum_{n=1}^\infty \frac{a_n}{n^s}.$$
\begin{itemize}
\item[(a)] If $a_n\ne 0$ then $\tau$ divides $n$. More precisely, $v_{\tau}(n)=v_{\tau}(p^{\zeta_p(S)}-1)$.
\item[(b)] If $m>\zeta_p(S)$ and a primitive prime divisor of $p^m-1$ divides $n,$ then $a_n=0.$
\item[(c)] If $n$ is the smallest positive integer such that $n\neq 1$ and $a_{n}\ne 0,$ then $a_{n}<0$.
\end{itemize}
\end{proposition}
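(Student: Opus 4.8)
The plan is to read the coefficients $a_n$ straight off Theorem \ref{reduction of P_GX}. Put $e=\zeta_p(S)$ and, for $J\subseteq I$, set $n_J:=T_W(q)/T_{W_J}(q)$; expanding each $(n_J)^{1-s}$ as $n_J\cdot n_J^{-s}$, that result gives
$$a_n=(-1)^{|I|}\,n\sum_{\substack{J\subseteq I\\ n_J=n}}(-1)^{|J|}.$$
By the Bruhat decomposition $n_J$ is the $p'$-part of the index $|S:P_J|$ of the $F$-stable parabolic subgroup $P_J$ of $S$; hence $n_J$ divides $|S|$, one has $n_J=1$ exactly when $J=I$, and $J_1\subsetneq J_2$ implies $W_{J_1}\subsetneq W_{J_2}$, hence $T_{W_{J_1}}(q)<T_{W_{J_2}}(q)$, so $J\mapsto n_J$ is strictly order-reversing on the subsets of $I$. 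I would also use the definition of $\zeta_p$, Zsigmondy's Theorem \ref{zig}, and the values of $\zeta_p(S)$ for $S$ of Lie type from \cite[Table 5.2.C]{Lie90}; the hypotheses $e>1$, and $e>6$ when $p=2$, are precisely what guarantees that $p^m-1$ has a primitive prime divisor for every $m\geq e$ and that $\tau$ divides $|S|$.

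Parts (b) and (c) are then purely formal. For (b): if $a_n\neq 0$ then $n=n_J$ divides $|S|$ for some $J$, so every prime $\ell$ dividing $n$ satisfies $\zeta_p(\ell)\leq\zeta_p(|S|)=e$; but a primitive prime divisor $\ell$ of $p^m-1$ has $\zeta_p(\ell)=m>e$, a contradiction, whence $a_n=0$. For (c): let $n_0:=\min\{n_J\mid J\subsetneq I\}$ and $\mathcal J:=\{J\subsetneq I\mid n_J=n_0\}$. If some $J\in\mathcal J$ were not a maximal element of $\{J'\subsetneq I\}$, say $J\subsetneq K\subsetneq I$, then $n_K<n_J=n_0$ by strict monotonicity, contradicting minimality; hence $|J|=|I|-1$ for every $J\in\mathcal J$, and therefore
$$a_{n_0}=(-1)^{|I|}\,n_0\sum_{J\in\mathcal J}(-1)^{|J|}=(-1)^{|I|}\,n_0\,(-1)^{|I|-1}\,|\mathcal J|=-\,n_0\,|\mathcal J|<0,$$
as $|\mathcal J|\geq 1$. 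Since any $n\neq 1$ with $a_n\neq 0$ equals some $n_J$ with $J\subsetneq I$ and hence is $\geq n_0$, the integer $n_0$ is the least positive $n\neq 1$ with $a_n\neq 0$; this is (c).

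For part (a), given $n\neq 1$ with $a_n\neq 0$, choose $J\subsetneq I$ with $n=n_J$ and compute, using $\tau\neq p$,
$$v_\tau(n)=v_\tau(|S:P_J|)=v_\tau(|S|)-v_\tau(|P_J|).$$
I would show $v_\tau(|P_J|)=0$ and $v_\tau(|S|)=v_\tau(p^e-1)$; together these give the assertion. For the first: write $P_J=U_J\rtimes L_J$ with $U_J$ a $p$-group; then $|L_J|$ divides the product of a torus order — a product of values $\Phi_d(q)$ with $d$ bounded independently of $J$ — and of the orders of (possibly twisted) groups of Lie type, over $\mathbb{F}_q$ or a small extension of it, attached to the proper sub-diagrams obtained by deleting the nodes in $I\setminus J$. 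Since $\operatorname{ord}_\tau(q)=e/f$ (where $q=p^f$) is large, $\tau$ divides no torus factor; and a type-by-type inspection of the Levi subgroups across the Lie types, compared with \cite[Table 5.2.C]{Lie90}, shows that every Lie-type factor occurring in a proper Levi has $\zeta_p$-value strictly less than $e=\zeta_p(S)$, so $\tau$ divides none of them either. For the second: since $e=\zeta_p(S)$ is maximal among the $\zeta_p(r)$ with $r$ dividing $|S|$, the prime $\tau$ divides exactly one cyclotomic factor of $|S|$, with multiplicity one there; a lifting-the-exponent argument (using that $\tau\equiv 1\pmod e$, so $\tau$ is coprime to $e$ and to $f$) then yields $v_\tau(|S|)=v_\tau(p^e-1)$. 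Hence $v_\tau(n)=v_\tau(p^e-1)>0$, which is (a).

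The step I expect to be the main obstacle is the structural input in part (a): showing that $\tau$ divides the order of no proper parabolic subgroup of $X$. This is exactly the delicate analysis of the subgroup structure of the almost simple groups of Lie type, via parabolic and Levi subgroups, that is announced in the introduction; one must run through the families, track the Levi decompositions — and, in the twisted cases, the extension fields over which the Levi factors are defined — and match them against the tabulated values of $\zeta_p(S)$. Parts (b) and (c), by contrast, are immediate consequences of Theorem \ref{reduction of P_GX} and the monotonicity of $J\mapsto T_{W_J}(q)$.
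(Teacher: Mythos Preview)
Your proof is correct, and parts (b) and (c) match the paper's arguments in spirit, though for (c) the paper invokes the M\"obius-function definition of $c_n(X)$ (minimal index forces $H$ maximal, hence $\mu_X(H)=-1$) rather than your sign computation via the parabolic formula; your route is arguably cleaner and more self-contained here.

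The substantive difference is in part (a). You propose to show $v_\tau(|P_J|)=0$ by decomposing the Levi factor and running a type-by-type inspection of its Lie-type constituents against the tabulated values of $\zeta_p$. The paper instead stays entirely on the level of the Poincar\'e polynomials: citing the explicit cyclotomic factorisations of $T_W(t)$ and $T_{W_J}(t)$ from \cite[\S9.4, \S14.2]{Carter} and \cite[Table~1]{Patassini}, it records that each $T_{W_J}(t)$ is a product of $\Phi_u(t)$ with $u\le \zeta_p(S)/f$, and that $\Phi_{\zeta_p(S)/f}(t)$ occurs in $T_W(t)$ with multiplicity exactly~$1$ and not at all in $T_{W_J}(t)$ for $J\neq I$. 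Since $\tau$ divides $\Phi_u(q)$ only for $u=\zeta_p(S)/f$, this immediately gives $v_\tau(n_J)=v_\tau(\Phi_{\zeta_p(S)/f}(q))=v_\tau(p^{\zeta_p(S)}-1)$ for every $J\subsetneq I$, uniformly across all Lie types. Your Levi-based argument would reach the same conclusion, but the paper's polynomial approach bypasses the structural case analysis you flag as the main obstacle: the needed cyclotomic data are already tabulated, so no separate treatment of twisted Levi factors or extension fields is required.
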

\begin{proof}
The difficult part of this proposition is (a). We use Theorem \ref{reduction of P_GX} and the description of the polynomials $T_{W}(t)$ and $T_{W_J}(t)$ given in \cite[Section 9.4, Section 14.2]{Carter} and in \cite[Section 3]{Patassini} (see in particular Table 1).
It turns out that if $q=p^f$, then $f$ divides $\zeta_p(S)$ and for each $J \subseteq I,$ the polynomial $T_{W_J}(t)$
can be written as a product of suitable cyclotomic polynomials $\Phi_u(t)$ with $u\leq \zeta_p(S)/f$. Moreover
$\Phi_{\zeta_p(S)/f}(t)$ appears with multiplicity exactly 1 in the factorization of $T_{W}(t)$ and does not divide $T_{W_J}(t)$
if $J\neq I.$ This means that if $\tau \in \l p,\zeta_p(S)\r$ and $J\neq I,$ then $v_{\tau}(T_W(q)/T_{W_J}(q))=
v_\tau(\Phi_{\zeta_p(S)/f}(p^f))=v_{\tau}(p^{\zeta_p(S)}-1).$
Now (b) follows from the fact that if $m>\zeta_p(S)$ then no prime divisor of $p^m-1$ divides $|S|.$ Finally (c) follows
from the fact that by the way in which $a_n$ is definited, the minimality of $n$ implies that the subgroups $H$ involved in the definition
of $a_n$ are maximal, thus $\mu_X(H)=-1$ and $a_n<0.$
\end{proof}

Combining the previous proposition with Lemma \ref{paz}, we obtain:
\begin{corollary}\label{remark reduction 2}
Assume that $G_i/G_{i+1}\cong S_i^{r_i}$ is a chief factors of $G$, where $S_i$ is a simple group of Lie type defined over a field of characteristic $p$. Assume that
$\zeta_p(S_i)>1$ and $\zeta_p(S_i)>6$ if $p=2.$  If $\tau\in \l p,\zeta_p(S_i)\r$, then we have:
\begin{itemize}
\item[(a)] If $b_{i,n}\ne 0$ and $(n,p)=1,$ then $\tau$ divides $n$. More precisely, $v_{\tau}(n)=r_i\cdot v_{\tau}(p^{\zeta_p(S_i)}-1)$.
\item[(b)] If $m>\zeta_p(S)$ and a primitive prime divisor of $p^m-1$ divides $n,$ then $b_{i,n}=0$.
\item[(c)] If $n$ is the smallest positive integer $>1$ such that $(n,p)=1$ and $b_{i,n}\ne 0,$ then $b_{i,n}<0$.
\end{itemize}
\end{corollary}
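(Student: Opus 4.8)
The plan is to obtain all three statements from Proposition~\ref{remark reduction 1}, applied to the almost simple group $X_i$ with socle $S_i$, together with the coefficient comparison supplied by Lemma~\ref{paz}; no structural input about groups of Lie type beyond what those two results already encode will be needed. First I would take $\pi=\{p\}$: since $S_i$ is of Lie type in characteristic $p$ we have $p\mid|S_i|$, so $\pi$ contains a divisor of $|S_i|$ as Lemma~\ref{paz} requires, and any $n$ with $(n,p)=1$ automatically satisfies $|S_i|\nmid n$. Lemma~\ref{paz} then gives two things: for $n$ coprime to $p$, $b_{i,n}=0$ unless $n=m^{r_i}$ for some integer $m$ (necessarily coprime to $p$), in which case $b_{i,m^{r_i}}=c_m(X_i)\,m^{r_i-1}$; and the identity $P_i^{\{p\}}(s)=P^{\{p\}}_{X_i,S_i}(r_is-r_i+1)$, which makes this correspondence reversible, with $c_m(X_i)$ equal to the coefficient $a_m$ of $P^{\{p\}}_{X_i,S_i}(s)$ at $m$. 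Since the hypotheses $\zeta_p(S_i)>1$, and $\zeta_p(S_i)>6$ when $p=2$, are exactly those of Proposition~\ref{remark reduction 1}, I may then invoke it for $(X,S,\tau)=(X_i,S_i,\tau)$.

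With this dictionary in hand, parts (a) and (c) are routine translations. For (a): if $(n,p)=1$, $n>1$ and $b_{i,n}\ne 0$, then $n=m^{r_i}$ with $a_m\ne 0$ and $m>1$, so Proposition~\ref{remark reduction 1}(a) gives $\tau\mid m$ and $v_\tau(m)=v_\tau(p^{\zeta_p(S_i)}-1)$; raising to the $r_i$-th power yields $\tau\mid n$ and $v_\tau(n)=r_i\,v_\tau(p^{\zeta_p(S_i)}-1)$. For (c): let $n$ be minimal $>1$ among integers coprime to $p$ with $b_{i,n}\ne 0$, write $n=m^{r_i}$ and $b_{i,n}=a_m\,m^{r_i-1}$ with $a_m\ne 0$, $m>1$; any integer $m'$ with $1<m'<m$, $(m',p)=1$ and $a_{m'}\ne 0$ would give $b_{i,(m')^{r_i}}=a_{m'}(m')^{r_i-1}\ne 0$ with $1<(m')^{r_i}<n$, contradicting minimality, so $m$ is the smallest integer $>1$ with $a_m\ne 0$ (coprimality to $p$ being automatic, since $a_k=0$ whenever $p\mid k$). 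Then $a_m<0$ by Proposition~\ref{remark reduction 1}(c), and $b_{i,n}=a_m\,m^{r_i-1}<0$.

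For (b) I would argue directly from the definition of $b_{i,n}$, since here $n$ need not be coprime to $p$. The key point is that $b_{i,n}\ne 0$ forces every prime divisor of $n$ to divide $|S_i|$: this is clear when $n=|S_i|^{r_i}$, and when $n\ne|S_i|^{r_i}$ the formula $b_{i,n}=\sum_{|L_i:H|=n,\ L_i=H\soc(L_i)}\mu_{L_i}(H)$ exhibits a subgroup $H$ with $L_i=H\soc(L_i)$, so that $n=|L_i:H|=|\soc(L_i):H\cap\soc(L_i)|$ divides $|\soc(L_i)|=|S_i|^{r_i}$. Hence, if a primitive prime divisor $\ell$ of $p^m-1$ with $m>\zeta_p(S_i)$ divided $n$, then $\ell\mid|S_i|$, so $\zeta_p(\ell)\le\zeta_p(S_i)<m$; but a \emph{primitive} prime divisor of $p^m-1$ has $\zeta_p(\ell)=m$, a contradiction, so $b_{i,n}=0$. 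I do not expect a genuine obstacle at this stage: the substance of the corollary lies entirely in Proposition~\ref{remark reduction 1}(a) (the cyclotomic bookkeeping for the polynomials $T_{W_J}$), and the only matters requiring care here are making the passage through Lemma~\ref{paz} run in both directions and the separate — but trivial — treatment of the exceptional index $n=|S_i|^{r_i}$ in part (b).
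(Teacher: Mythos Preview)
Your proof is correct and follows exactly the route the paper intends: the authors merely write ``Combining the previous proposition with Lemma~\ref{paz}, we obtain'' and leave the details to the reader, and what you have written is precisely that combination spelled out, including the reversibility of the correspondence $b_{i,m^{r_i}}=a_m\,m^{r_i-1}$ via the identity $P_i^{\{p\}}(s)=P^{\{p\}}_{X_i,S_i}(r_is-r_i+1)$ and the separate handling of part~(b) (where $n$ need not be coprime to $p$) by the observation that $b_{i,n}\neq 0$ forces $n\mid |S_i|^{r_i}$.
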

\section{Proofs of Theorem 1 and Theorem 2}
We start now the proofs of our main results.
We assume that $G$ is a finitely generated profinite group $G$  with the property that $P_G(s)=\sum_n a_n/n^s$ is rational.
As described in Section \ref{notation}, $P_G(s)$ can be written as a formal infinite product of finite Dirichlet series $P_i(s)
=\sum_{n\in \bb{N}}{b_{i,n}}/{n^s}$
corresponding to the factors $G_i/G_{i+1}$ of a chief series of $G.$
Let $J$ be the set of indices $i$ such that $G_i/G_{i+1}$ is a non-Frattini chief factor.
Since $P_i(s)=1$ if $i\notin J,$ we have
$$P_G(s)=\prod_{j\in J}P_j(s).$$

For $C(s)=\sum_{n=1}^\infty  c_n/n^s\in \cal R$, we define $\pi(C(s))$ to be the set of the primes $q$ for which there exists
at least one multiple $n$ of $q$ with $c_n\ne 0$. Notice that if $C(s)B(s)=A(s),$ then $\pi(C(s))\subseteq \pi(A(s))\bigcup \pi(B(s)).$
In particular if $C(s)$ is rational then $\pi(C(s))$ is finite.
Let $\cal S$ be the set of the finite simple groups that are isomorphic to a composition factor of some non-Frattini chief factor of $G$. The first step in the proofs of Theorem 1 and 2 is to show that $\cal S$ is finite. The proof of this claim requires the following result.
\begin{lemma}[{\cite[Lemma 3.1]{DeLu07}}]\label{number of non-iso composition factors}
Let $G$ be a finitely generated profinite group and let $q$ be a prime with $q\notin \pi(P_G(s))$.
Then $G$ has no maximal subgroup of index a power of $q.$ In particular if
 $q$ divides the order of a non-Frattini chief factor of $G,$ then this factor is not a $q$-group.
\end{lemma}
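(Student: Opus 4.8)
The plan is to contrapose: I assume $G$ has a maximal subgroup $M$ of index $q^k$ for some $k\geq 1$ and show that $q\in\pi(P_G(s))$, which gives the first assertion; the second assertion follows because a chief factor $G_i/G_{i+1}$ that is a $q$-group below which all complements are maximal forces the existence of such an $M$. First I would recall that, by definition, $\pi(P_G(s))$ is the set of primes $q$ for which some coefficient $a_n(G)$ with $q\mid n$ is nonzero, so it suffices to produce an integer $n$ divisible by $q$ with $a_n(G)=\sum_{|G:H|=n}\mu_G(H)\neq 0$. The natural candidate is to take $n$ minimal among the indices of maximal subgroups of $G$ that are powers of $q$; write $n=q^k$ with $k$ least. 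For every open $H\leq G$ with $|G:H|=n$ and $\mu_G(H)\neq 0$, the recursion defining $\mu_G$ forces $H$ to be an intersection of maximal subgroups, and since $n=q^k$ is the smallest index of a maximal subgroup which is a $q$-power, by minimality each such $H$ must itself be maximal (an intersection of two distinct maximal subgroups, each of index a power of $q$ exceeding $1$, would have index divisible by a strictly larger power of $q$, or by a prime different from $q$, or would have index $n$ only if the two maximals coincide — the short combinatorial check here is that a proper intersection has strictly larger index, and any maximal of $q$-power index has index $\geq q^k$ by minimality). Hence $\mu_G(H)=-1$ for each of the finitely many maximal subgroups $H$ of index $q^k$, so $a_{q^k}(G)$ equals minus the number of such subgroups, which is a negative integer; in particular $a_{q^k}(G)\neq 0$ and $q\in\pi(P_G(s))$.

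For the \emph{in particular} clause, suppose $q\mid |G_i/G_{i+1}|$ for some non-Frattini chief factor $G_i/G_{i+1}$ and, for contradiction, that this factor is a $q$-group, say $G_i/G_{i+1}\cong C_q^{r_i}$. Since the factor is non-Frattini, it is not contained in $\frat(G/G_{i+1})$, so there is a maximal subgroup $\bar M$ of $G/G_{i+1}$ not containing $G_i/G_{i+1}$; then $\bar M\cap (G_i/G_{i+1})$ is a maximal $G$-submodule of the elementary abelian $q$-group $G_i/G_{i+1}$, so $|G/G_{i+1} : \bar M| = q$. Pulling back, the preimage $M$ of $\bar M$ in $G$ is a maximal subgroup of $G$ of index $q$, contradicting the first part applied with $k=1$ (which put $q\in\pi(P_G(s))$, against the hypothesis $q\notin\pi(P_G(s))$). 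Therefore the factor cannot be a $q$-group.

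The one place demanding genuine care — the ``main obstacle'' — is the claim that every $H$ contributing to $a_{q^k}(G)$ with $\mu_G(H)\neq 0$ is actually maximal rather than a deeper intersection. The key input is Lagrange together with minimality: if $H=M_1\cap M_2\cap\cdots$ is a nonredundant intersection of two or more distinct maximal subgroups and $|G:H|=q^k$, then for the minimal such configuration one of the $M_j$ has index a proper power of $q$ strictly smaller than $q^k$ (since $|G:H|$ is divisible by $|G:M_j|$ for each $j$ and the intersection is proper), contradicting the minimality of $k$; and a maximal subgroup of $G$ whose index divides $q^k$ but is not a power of $q$ cannot occur because then $|G:H|$ would be divisible by a prime $\neq q$. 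This reduction is exactly the standard argument underlying ``$a_n(G)=0$ for $n$ below the least index of a maximal subgroup,'' and I would cite or reprove it in a line; everything else is bookkeeping.
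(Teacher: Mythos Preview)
The paper does not supply its own proof of this lemma; it is simply quoted from \cite[Lemma~3.1]{DeLu07}, so there is no in-paper argument to compare against. Your approach is the standard one and is essentially correct: take the least $k$ with a maximal subgroup of index $q^k$; any open $H$ with $|G:H|=q^k$ and $\mu_G(H)\neq 0$ is an intersection of maximals, each maximal $M\supseteq H$ has $|G:M|\mid q^k$ hence $|G:M|=q^k$ by minimality, forcing $H=M$; thus $a_{q^k}(G)$ is minus the number of maximal subgroups of index $q^k$, so $q\in\pi(P_G(s))$.

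There is one small slip in your ``in particular'' paragraph. The intersection $\bar M\cap (G_i/G_{i+1})$ is indeed a $G$-submodule (it is normalized by $\bar M$ and by the abelian group $G_i/G_{i+1}$, hence by their product $G/G_{i+1}$), but since $G_i/G_{i+1}$ is an \emph{irreducible} $G$-module this intersection is trivial, not a ``maximal $G$-submodule''. Consequently $|G/G_{i+1}:\bar M|=|G_i/G_{i+1}|=q^{r_i}$, not $q$. This does not harm the conclusion, since $q^{r_i}$ is still a power of $q$ and the first part then applies.
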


Let $\pi(G)$ be the set of the primes $q$ with the properties that $G$ contains at least an open subgroup $H$ whose index is divisible by $q$.
Obviously $\pi(P_G(s))\subseteq \pi(G)$. By \cite[Lemma 3.2]{DeLu07} and the classification of the finite simple groups, $\cal S$ is finite
if and only if $\pi(G)$ is finite.

\begin{lemma}\label{gamma finite}If $G$ satisfies the hypotheses  of either Theorem 1 or Theorem 2, then
the sets $\cal S$ and $\pi(G)$ are finite.
\end{lemma}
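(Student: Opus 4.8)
The plan is to prove that $\mathcal S$ is finite; as recalled just before the statement, this is equivalent to finiteness of $\pi(G)$ (by \cite[Lemma~3.2]{DeLu07} and the classification of the finite simple groups), so both assertions follow at once. Recall $\mathcal S=\{S_j\mid j\in J\}$ is the set of composition factors of the non-Frattini chief factors $G_j/G_{j+1}\cong S_j^{r_j}$ of $G$. The abelian members of $\mathcal S$ are controlled immediately: if $S_j$ is cyclic of order $q$, then $G_j/G_{j+1}$ is a non-Frattini elementary abelian $q$-group, so $q\in\pi(P_G(s))$ by Lemma~\ref{number of non-iso composition factors}; and $\pi(P_G(s))$ is finite because $P_G(s)$ is rational. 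Hence only finitely many abelian groups lie in $\mathcal S$.

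For the nonabelian members, note that every composition factor of $G$ is a composition factor of $N$ or of the finite group $G/N$. In the situation of Theorem~2 this confines the nonabelian composition factors of $G$ to the $26$ sporadic groups together with the finitely many nonabelian factors of $G/N$, and there is nothing more to prove. In the situation of Theorem~1 it confines them to the simple groups of Lie type in characteristic $p$ together with finitely many nonabelian factors of $G/N$; and since, by inspection of \cite[Table~5.2.C]{Lie90}, only finitely many groups of Lie type in characteristic $p$ have $\zeta_p$-value below a prescribed bound, it suffices to bound $\zeta_p$ on the set $\mathcal L$ of Lie-type groups in characteristic $p$ occurring in $\mathcal S$.

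Assume $\zeta_p$ is unbounded on $\mathcal L$. I would then fix a non-Frattini chief factor $G_{j_0}/G_{j_0+1}\cong S_{j_0}^{r_{j_0}}$ with $m:=\zeta_p(S_{j_0})$ chosen so large that: $m>6$; every primitive prime divisor $\tau$ of $p^m-1$ (which exists by Theorem~\ref{zig}) satisfies $\tau\notin\pi(P_G(s))$ and exceeds every prime divisor of $|\mathrm{Aut}(S)|$ for each of the finitely many nonabelian composition factors $S$ of $G$ not of Lie type in characteristic $p$; and, writing $b^*_{j,n}$ for the coefficient of $n^{-s}$ in $P_j^{\{p\}}(s)$, the quantity $w:=\min\{n>1\mid b^*_{j_0,n}\neq 0\}$ is least possible among all chief factors whose composition factor $S$ has $\zeta_p(S)=m$. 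By Corollary~\ref{remark reduction 2}(c) we have $b^*_{j_0,w}<0$, and then part (a) gives $\tau\mid w$ for every primitive prime divisor $\tau$ of $p^m-1$. Since $(w,p)=1$ and $\tau\notin\pi(P_G(s))$, the coefficient of $w^{-s}$ in $P_G(s)$, equivalently in $P_G^{\{p\}}(s)=\prod_j P_j^{\{p\}}(s)$ (the operation $(\cdot)^{\{p\}}$ being a ring endomorphism), is $0$. As the contribution of the index $w$ of $P_{j_0}^{\{p\}}(s)$ alone is $b^*_{j_0,w}<0$, there must be a further factorisation $w=\prod_{j\ne j_0}n^{(j)}$ with $\prod_j b^*_{j,n^{(j)}}\ne 0$.

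The crux — and the step I expect to be the most delicate — is to show that any such factorisation has the shape $n^{(j_1)}=w$ for a single $j_1\ne j_0$ with $S_{j_1}$ again of Lie type in characteristic $p$ and $\zeta_p(S_{j_1})=m$, all other $n^{(j)}$ being $1$; by the minimality in the choice of $j_0$, $w$ is then the least nontrivial index of $P_{j_1}^{\{p\}}(s)$, so $b^*_{j_1,w}<0$ by Corollary~\ref{remark reduction 2}(c). Then the coefficient of $w^{-s}$ in $P_G^{\{p\}}(s)$ comes out as a sum of strictly negative integers, contradicting its vanishing. Establishing this reduction is pure primitive-prime-divisor book-keeping: since $\tau\mid w$, some $n^{(j)}>1$ is divisible by $\tau$; Lemma~\ref{number of non-iso composition factors} excludes an abelian $S_j$ (it would be a non-Frattini $\tau$-group with $\tau\notin\pi(P_G(s))$), and the largeness of $\tau$ excludes the finitely many exceptional nonabelian types, so $S_j$ is of Lie type in characteristic $p$ with $\zeta_p(S_j)\ge m$; were $\zeta_p(S_j)>m$, then by Corollary~\ref{remark reduction 2}(a) a primitive prime divisor $\tau'$ of $p^{\zeta_p(S_j)}-1$ would divide $n^{(j)}$ and hence $w$, which is impossible because $w$ divides a power of $|X_{j_0}|$ (Lemma~\ref{paz}) while $\tau'$ is too large to divide $|S_{j_0}|$ or $|\mathrm{Out}(S_{j_0})|$ — here one uses that $f\mid\zeta_p(S_{j_0})$ and that the Lie rank of $S_{j_0}$ does not exceed $\zeta_p(S_{j_0})$, so that $\tau'>\zeta_p(S_j)>m$ exceeds every prime divisor of $|\mathrm{Out}(S_{j_0})|$ and $\mathrm{ord}_{\tau'}(p)=\zeta_p(S_j)>m\ge\zeta_p(S_{j_0})$ forbids $\tau'\mid|S_{j_0}|$; finally $\zeta_p(S_j)=m$, together with $n^{(j)}\mid w$, $n^{(j)}>1$, and the minimality in the choice of $j_0$, forces $n^{(j)}=w$ and all remaining factors to be $1$.
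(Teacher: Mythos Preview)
Your argument is essentially correct and follows the same philosophy as the paper --- exploit a large primitive prime divisor $\tau$ of $p^m-1$ that cannot appear in $\pi(P_G(s))$, and show that the coefficient of $w^{-s}$ is a sum of strictly negative contributions --- but you organise the argument differently. The paper first splits $P_G(s)=A(s)B(s)$ with $A(s)=\prod_{i:\,S_i\in\mathcal S^*}P_i(s)$ the product over the Lie-type factors and $B(s)$ the rest; since $\pi(B(s))$ is finite, $\pi(A(s))$ is finite, and the minimal-index analysis is then carried out entirely inside $A(s)$, where \emph{every} factor satisfies Corollary~\ref{remark reduction 2}. You instead work directly with the full product $P_G^{\{p\}}(s)$ and dispose of the non-Lie-type contributions by a case analysis on $S_{j_1}$. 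The paper's splitting is cleaner precisely because it removes this case analysis; your route is more direct but puts more weight on the choice of $\tau$.

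There is one small oversight in your exclusion of the ``exceptional'' nonabelian factors. You require $\tau$ to exceed every prime divisor of $|\mathrm{Aut}(S)|$ for the finitely many non-Lie-type nonabelian $S$, and then claim that $\tau\mid n^{(j_1)}$ forces $S_{j_1}$ to be of Lie type. But $b^*_{j_1,n^{(j_1)}}\neq 0$ only gives $n^{(j_1)}\mid |L_{j_1}|$, and $|L_{j_1}|$ may have prime divisors $\le r_{j_1}$ coming from the wreath product action, not just primes of $|\mathrm{Aut}(S_{j_1})|$ (Lemma~\ref{paz} does not apply when $|S_{j_1}|\mid n^{(j_1)}$, which can happen if $p\nmid|S_{j_1}|$). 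The fix is immediate: since the chief series may be taken to pass through $N$, there are only \emph{finitely many} chief factors with exceptional nonabelian composition type, hence finitely many groups $L_{j_1}$, and one simply adds the requirement that $\tau$ exceed every prime of each such $|L_{j_1}|$. With this adjustment your argument goes through. (The paper's formulation has an analogous mild imprecision in asserting $\pi(B(s))\subseteq\bigcup_{S\in\mathcal S\setminus\mathcal S^*}\pi(S)$, but only the finiteness of $\pi(B(s))$ is used, and that is clear for the same reason.)
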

\begin{proof}
Since $P_G(s)$ is rational, we have that $\pi(P_G(s))$ is finite. Therefore, it follows from Lemma \ref{number of non-iso composition factors} that $\cal S$ contains only finitely many abelian groups. If $G$ satisfies the hypotheses of Theorem 2, then a non abelian group in  $\cal S$ is either one of the 26 sporadic simple groups or is isomorphic to a composition factor of the finite group $G/N$. In any case we have only finitely many possibilities. Consider now the case when $G$ satisfies the hypothesis of Theorem 1 and
assume by contradiction that $\cal S$ is infinite. This is possible only if the
subset $\cal S^*$ of  the simple groups in $\cal S$ that are of Lie type over a field of characteristic $p$ is infinite.
In particular, the set $\Omega=\{\zeta_p(S)\mid S \in \cal S^*\}$ is infinite (see \cite[Table 5.2C]{Lie90}).
Let $$I:=\{j\in J\mid S_j \in \cal S^*\},
\ A(s):=\prod_{i\in I}P_i(s)\ \text{ and }\ B(s):=\prod_{i\notin I}P_i(s).$$ Notice that $\pi(B(s))\subseteq \bigcup_{S\in \cal S\setminus \cal S^*}\pi(S)$ is a finite set. Since $P_G(s)=A(s)B(s)$ and $\pi(P_G(s))$ is finite, if follows that the set $\pi(A(s))$ is finite.
According with Theorem \ref{zig}, if $m$ is large enough (for example if $m > 6)$  then the set $\l p,m\r$ is non empty.
We can find a  positive integer $m\in\Omega$ such that $\l p,m\r\neq \emptyset$ but $\l p,m\r \cap \pi(A(s))=\emptyset.$
Notice that if $m\neq u$ then $\l p,m \r \cap \l p, u \r=\emptyset.$
Let $\Gamma_m$ be the set of the positive integers $n$ such that  there exists $\tau \in \l p, m \r$ dividing $n$ but no prime in
$\l p,u\r$ divides $n$ if $u>m$. Notice that if $b_{i,n}\neq 0,$ then $\zeta_p(S_i)=m$ if and only if $n\in \Gamma_m.$
Set
$$
\begin{aligned}
&r:=\min\{r_i \mid S_i\in\cal{S}^* \text{ and } \zeta_p(S_i)=m\},\\&I^*:=\{i\in I \mid r_i=r~\rm{and}~S_i\in\cal{S}\},\\
&\beta:=\min\{n>1 \mid n \in \Gamma_m \text{ and }b_{i,n} \neq 0 \text { for some } i \in I^*\}.
\end{aligned}$$
By Corollary \ref{remark reduction 2},
if $i\in I$ and $b_{i,\beta} \neq 0$, then $\zeta_p(S_i)=m,$ $r_i=r$ and
$b_{i,\b}<0.$
Hence the coefficient $c_{\b}$ of $1/{\b^s}$ in $A(s)$ is
$$c_\b=\sum_{i\in I,r=r_i}b_{i,\b}=\sum_{i\in I^*}b_{i,\b}<0.$$
On the other hand, again by  Corollary \ref{remark reduction 2}, all the primes in $\l p,m\r$ divide $\beta.$
But then $\l p,m\r\subseteq \pi(A(s)),$ which is a contradiction. So we have proved that $\cal S$ is finite. By \cite[Lemma 3.2]{DeLu07}, if follows that $\pi(G)$ is also finite.
\end{proof}



The previous result allows us to employ the following:

\begin{proposition}\label{two conditions fulfilled}
Let $G$ be a finitely generated profinite group and assume that $\pi(G)$ is finite. For each $n$, there are only finitely many non-Frattini factors in a chief series whose composition length is at most $n$. Moreover   there exists a prime $t$ such that no non-Frattini chief factor of $G$ has
composition length divisible by $t.$
\end{proposition}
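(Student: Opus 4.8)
The plan is to prove the two assertions separately --- they are precisely conditions (i) and (ii) of Proposition~\ref{skolem theorem} applied to the composition lengths $r_i$ --- relying throughout on the fact, established at the end of the proof of Lemma~\ref{gamma finite}, that finiteness of $\pi(G)$ forces the set $\cal S$ of simple composition factors of the non-Frattini chief factors of $G$ to be finite; in particular there is a constant $N$ with $|S_i|\le N$ for every $i$.

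\emph{First assertion.} Let $G_i/G_{i+1}\cong S_i^{r_i}$ be a non-Frattini chief factor with $r_i\le n$. Since it is non-Frattini, $G_i/G_{i+1}$ is not contained in every maximal subgroup of $G/G_{i+1}$, so I can choose a maximal subgroup $M_i\le G$ with $G_{i+1}\le M_i$ but $G_i\not\le M_i$; this $M_i$ is open, as it contains the open subgroup $G_{i+1}$, and because $G_i/G_{i+1}$ is a minimal normal subgroup of $G/G_{i+1}$ we have $M_iG_i=G$, whence $|G:M_i|$ divides $|G_i/G_{i+1}|=|S_i|^{r_i}\le N^{n}$. The key observation is that $i\mapsto M_i$ is injective: if $i<j$ then $G_j\le G_{i+1}\le M_i$ while $G_j\not\le M_j$, so $M_i\ne M_j$. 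As a finitely generated profinite group has only finitely many open subgroups of any prescribed index, $G$ has only finitely many of index at most $N^{n}$, and the first assertion follows.

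\emph{Second assertion.} It is enough to find a finite set $\Pi$ of primes containing every prime divisor of every $r_i$ (over the non-Frattini chief factors of $G$) and then to choose a prime $t\notin\Pi$. I split according to whether $S_i$ is abelian. If $S_i$ is nonabelian, $G$ acts by conjugation on the $r_i$ simple direct factors of $G_i/G_{i+1}$ and the action is transitive --- the product of the factors in an orbit would be a normal subgroup of $G/G_{i+1}$ strictly between $1$ and $G_i/G_{i+1}$ --- so the stabiliser of a factor, which contains $C_G(G_i/G_{i+1})\ge G_{i+1}$, is an open subgroup of index $r_i$; hence every prime dividing $r_i$ lies in $\pi(G)$. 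If $S_i$ is abelian of order $p$, then $V:=G_i/G_{i+1}$ is an irreducible $\bb F_p[G]$-module of dimension $r_i$ and $L:=G/C_G(V)$ acts faithfully on $V$; moreover $C_G(V)$ is open of index $|L|$, so every prime divisor of $|L|$ lies in $\pi(G)$. I would then show that the prime divisors of $\dim_{\bb F_p}V$ lie in a finite set depending only on $\pi(G)$. Writing $E=\mathrm{End}_L(V)=\bb F_{p^e}$, one has $r_i=e\cdot\dim_E V$ with $V$ absolutely irreducible over $E$; here $e$ divides the order of $p$ modulo the $p'$-part $m$ of $\exp(L)$ (the Brauer character of $V$ takes values in $\bb Q(\zeta_m)$), so $e\mid\varphi(m)$ with $m$ a $\pi(G)$-number, and the prime divisors of $e$ lie in a fixed finite set. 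The quantity $\dim_E V$ I would treat by induction on $|L|$ via Clifford's theorem: a proper nontrivial normal subgroup $K$ of $L$ yields $\dim_E V=|L:I|\cdot\dim_E U$ with $I$ an inertia group and $U$ a simple module, where $|L:I|$ divides $|L|$ and $\dim_E U$ is handled by induction; and when $L$ is simple the bound follows from the same classification-based fact invoked above, namely that a finite simple group of $\pi(G)$-order is one of only finitely many, so that $\dim_E V$ is then bounded in terms of $\pi(G)$. Assembling the contributions gives $\Pi$, and then any prime $t\notin\pi(G)\cup\Pi$ --- one exists because this union is finite --- divides no $r_i$, proving the second assertion.

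The routine parts are the first assertion and the nonabelian case of the second. The genuine obstacle is the abelian case of the second assertion when $p$ divides $|L|$: in that (modular) situation the isotypic case $I=L$ of the Clifford induction forces one to control the ramification $e'$ in a decomposition $V|_K\cong e'\,W$, which must be done through a Schur cover of $L/K$ --- whose order again involves only primes of $\pi(G)$ --- together with the finiteness of simple groups of $\pi(G)$-order; arranging a well-founded induction that passes through these Schur covers, and handling the modular subtleties there, is where the real work lies.
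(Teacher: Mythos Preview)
Your treatment of the first assertion is essentially the paper's: bound the index of a supplement of each non-Frattini factor by $N^{n}$ and invoke the fact that a finitely generated profinite group has only finitely many open subgroups of bounded index. Your explicit injectivity check for $i\mapsto M_i$ is a welcome clarification; the paper leaves this step implicit.

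For the second assertion the paper does not argue at all: it simply cites \cite[Corollary~5.2]{DeLu07}. What you have written is, in effect, a sketch of how one might reprove that corollary. Your nonabelian case is correct and self-contained --- transitivity of the conjugation action on the simple direct factors yields an open subgroup of index $r_i$, so every prime dividing $r_i$ already lies in $\pi(G)$. The abelian case, however, remains an outline, and you correctly locate the obstruction yourself: in the isotypic modular situation one must pass through Schur covers of sections of $L$, and then the induction on $|L|$ is no longer well-founded without a more refined invariant. This is not a gap that closes in a paragraph; the argument in \cite{DeLu07} that handles it is genuinely substantial. Unless you intend to reproduce that work in full, the appropriate move is to cite the reference, as the paper does.
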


\begin{proof}Since $\pi(G)$ is finite, the set $\cal S$ of the composition factors of $G$ is also finite and therefore there exists
$u\in \mathbb N$ such that $|S|\leq u$ for each $S \in \cal S.$ Now assume by contradiction that a chief series of $G$ contains infinitely many non-Frattini
chief factors of composition length at most $n$. Let $X/Y$ be one of them: since $X/Y$ is non-Frattini there exists a proper supplement $H/Y$ of $X/Y$ in
$G/Y$. Clearly $|G:H|\leq |X/Y| \leq u^n$. In this way we construct infinitely many subgroups of index at most $u^n$, which is not possible since a finitely
generated profinite group contains only finitely many subgroups of a given index. The second part of the statement is \cite[Corollary 5.2]{DeLu07}.
\end{proof}


For a simple group $S\in\cal {S}$, let $I_S=\{j\in J \mid S_j \cong S\}$.
Our aim is to prove that, under the hypotheses of Theorem 1 and 2,  $J$ is a finite set. We have already proved that $\cal S$ is finite, so it suffices to prove that $I_S$ is finite for each $S \in \cal S.$ First we consider the case when $S$ is abelian.

\begin{lemma}\label{abelian finite}Assume that $G$ is a finitely generated profinite group
such that $P_G(s)$ is rational and $\pi(G)$ is finite. Then for any prime $q$, if $S$ has a subgroup
with index a power of $q,$ then $I_S$ is finite. In particular if
$S$ is cyclic, then $I_S$ is finite.
\end{lemma}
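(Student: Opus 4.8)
The plan is to isolate, inside the rational series $P_G(s)$, the part supported on powers of $q$, to recognise it as an infinite product of one-term factors $1-c/(q^{r})^s$, and to apply the Skolem--Mahler--Lech consequence (Proposition~\ref{skolem theorem}) to force the relevant index set to be finite.

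Concretely, I would take $\pi$ to be the set of all primes different from $q$, so that $P_G^\pi(s)=\prod_{j\in J}P_j^\pi(s)$ is rational by Remark~\ref{pirat}. If $S_j\cong C_\ell$ is abelian then $P_j(s)=1-c_j/(\ell^{r_j})^s$, hence $P_j^\pi(s)=1$ unless $\ell=q$, in which case $P_j^\pi(s)=1-c_j/(q^{r_j})^s$ with $c_j\ge 1$ (the factor is non-Frattini). If $S_j$ is nonabelian then $|S_j|$ has a prime divisor $\neq q$ (it has at least three prime divisors), so Lemma~\ref{paz} gives $P_j^\pi(s)=P_{X_j,S_j}^\pi(r_j s-r_j+1)$; since $X_j=S_jH$ implies $[X_j:H]=[S_j:S_j\cap H]$, a power of $q$ can occur as such an index only when $S_j$ has a proper subgroup of $q$-power index, and then $P_j^\pi(s)=1$.

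Next I would describe the surviving factors. Using the classification of finite simple groups with a subgroup of prime power index (a theorem of Guralnick), a nonabelian simple group $T$ with a proper subgroup of $q$-power index has a \emph{unique} such index $q^{c(T)}$, realised by a maximal subgroup. So, for the relevant $j$, every $H<X_j$ with $X_j=S_jH$ and $[X_j:H]$ a $q$-power has $[X_j:H]=q^{c(S_j)}$, and such an $H$ is forced to be maximal in $X_j$; hence $c_{q^{c(S_j)}}(X_j)$ equals minus the number of such $H$, and
\[ P_j^\pi(s)=1+\frac{c_{q^{c(S_j)}}(X_j)\,q^{c(S_j)(r_j-1)}}{(q^{c(S_j)r_j})^s}. \]
Putting $c(C_q):=1$ and $J'=\{j\in J\mid P_j^\pi(s)\neq 1\}$, I obtain for each $j\in J'$ a factor $P_j^\pi(s)=1-e_j/(q^{\delta_j})^s$ with $\delta_j:=c(S_j)r_j$ and $e_j$ a positive integer, so $P_G^\pi(s)=\prod_{j\in J'}(1-e_j/(q^{\delta_j})^s)$ is rational. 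Proposition~\ref{skolem theorem} then applies with base $q$, exponents $\delta_j$ and coefficients $e_j$: its hypothesis (i) holds since $\delta_j\mid n$ forces $r_j\le n$ and, by Proposition~\ref{two conditions fulfilled}, only finitely many non-Frattini chief factors have composition length $\le n$; hypothesis (ii) holds because Proposition~\ref{two conditions fulfilled} provides a prime $t$ dividing no $r_j$ (its proof in fact yields infinitely many such $t$), which, chosen large enough to exceed the finitely many values $c(S_j)$ (finitely many, as $\pi(G)$ is finite), divides no $\delta_j$. Thus $J'$ is finite. Finally, the hypothesis that $S$ itself has a proper subgroup of $q$-power index yields $P_j^\pi(s)\neq 1$ for every $j\in I_S$, i.e.\ $I_S\subseteq J'$, so $I_S$ is finite; the ``in particular'' statement follows by taking $q$ with $S\cong C_q$.

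The main obstacle is the group-theoretic input of the third step, hidden in the last sentence: one must know that a simple group has at most one $q$-power index among its proper subgroups, and that for a suitable choice of $q$ the corresponding primitive action of $S$ on $q^{c(S)}$ points extends to $\aut S$ — equivalently, the $S$-conjugacy class of the relevant maximal subgroup is $\aut S$-invariant — so that $c_{q^{c(S)}}(X_j)\neq 0$ for \emph{every} almost simple $X_j$ with socle $S$ and the factors $P_j^\pi(s)$ with $j\in I_S$ are genuinely nontrivial one-term series. For $S$ abelian this is vacuous (hence the immediate cyclic case); for the nonabelian simple groups that actually occur in the proofs of Theorems~1 and 2 it is a finite check against their lists of maximal subgroups.
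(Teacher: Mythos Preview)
Your proof is correct and follows exactly the paper's line: pass to $P_G^\pi(s)$ with $\pi$ the set of all primes $\neq q$, invoke Guralnick's theorem to reduce each surviving factor to a one-term series $1-e_j/(q^{\delta_j})^s$, and finish with Proposition~\ref{skolem theorem} together with Proposition~\ref{two conditions fulfilled}. You are in fact more explicit than the paper on two details it leaves to the reader---why the prime $t$ may be chosen coprime to the finitely many values $c(S_j)$ as well as to the $r_j$, and the need (for nonabelian $S$) to verify that the $q$-power-index maximal subgroup of $S$ extends to a supplement of $\soc X_j$ in every almost simple $X_j$; the paper simply records the corresponding coefficients as ``nonnegative integers $d_j$'' and does not comment further. (One slip of the pen: at the end of your second paragraph you surely mean $P_j^\pi(s)=1$ when $S_j$ has \emph{no} proper subgroup of $q$-power index, not when it does.)
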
 %
\begin{proof}
Let $\cal{S}_q$ be the set of the non abelian simple groups in $\cal {S}$ containing a proper subgroup  of $q$-power index.
A theorem proved by Guralnick \cite{gur} implies that if $T\in \cal{S}_q$ then there exists a unique
positive integer $\alpha(T)$ with the property that $T$ contains a subgroup of index $q^{\alpha(T)}.$
Consider the set $\pi$ of all the primes different from $q$. By
Lemma \ref{paz},
there exist positive integers $c_i$
and nonnegative integers $d_i$ such that
\begin{equation}\label{prodoinfo}P_{G}^\pi(s)=\prod_{i\in I_S}\left(1-\frac{c_i}{q^{r_is}}\right)\prod_{T\in\cal{S}_q}\left(\prod_{j\in I_T}\left(1-\frac{d_j}{q^{\alpha(T)r_js}}\right)\right)
\end{equation}

Since $\cal{S}$ is finite,  the set $\{\alpha(T) \mid T\in\cal{S}_q\}$ is finite. Moreover, by Proposition \ref{two conditions fulfilled}, there is a prime number $t$ such that no element in $$\{r_i \mid i\in I_S\}\bigcup\{\alpha(T)r_j \mid T\in \cal{S}_q \text { and } j\in  I_T\}$$ is divisible by $t$. Since $P_G(s)$ is rational, $P_{G}^\pi(s)$ is also rational. But then, by Proposition \ref{skolem theorem}, the number of nontrivial factors in the product at the right side of equation (\ref{prodoinfo}) is finite. In particular, $I_S$ is a finite set.
\end{proof}

\begin{proof}[{Proof of Theorem 1}]
Let $\cal T$ be the set of the almost simple groups $X$ such that
there exist infinitely many $i\in J$ with $X_i\cong X$
and let $I=\{i\in J\mid X_i \in \cal T\}.$
The hypotheses of Theorem 1 combined with Lemma \ref{abelian finite} implies that $J\setminus I$ is finite.
We have to prove that $J$ is finite; this is equivalent to show that $I=\emptyset.$
But then, in order to complete our proof, it suffices to prove  the following claim.

\noindent $(*)$ {\sl For every $n\in\Bbb N,$ $I_n=\{i\in I \mid \zeta_p(S_i)=n\}=\emptyset.$ }

\noindent Assume that the claim is false and let $m$ be the smallest integer such that the set $I_m\neq \emptyset.$
Since $J\setminus I$ is finite and $P_G(s)=\prod_{i\in J}P_i(s)$ is rational,
also $\prod_{i\in I}P_i(s)$ is rational. In particular, the following series is rational:
$$Q(s)=\prod_{i\in I}P^{\{p\}}_i(s).$$
We distinguish three different cases:
\begin{itemize}
  \item[(1)] $m=1, p=2^t-1, t\geq 2;$
  \item[(2)] $m\leq 5, p=2;$
  \item[(3)] all the other possibilities.
 \end{itemize}
Assume that $(1)$ occurs. By \cite[Table 5.2.C]{Lie90} if $\zeta_p(S)=1,$ then $S\cong PSL_2(p)$. In particular
$S$ has a subgroup of index a power of 2 and $I_S$ (and consequently $I_1$) is finite by Lemma \ref{abelian finite}.

In case  (3), it follows by Theorem \ref{zig} that $\l p,t\r\neq \emptyset $  for every $t > m;$ we set $\pi=\bigcup_{t>m}\l p,t\r.$ In case (2),  $\l p,t\r\neq \emptyset$   whenever $t>6$ and we set $\pi=\bigcup_{t>6}\l p,t\r.$ The Dirichlet series
$H(s)=Q^{\pi}(s)$ is rational.  By Corollary \ref{remark reduction 2}, if $i\in I_t$ and $\tau \in \l p,t\r,$ then
$P_i^{\{\tau,p\}}(s)=1$; in particular $P_i^{\pi}(s)=1$ whenever $\l p,t\r \subseteq \pi.$ This implies
$$H(s)=\begin{cases}\prod_{i\in I_m}P^{\{p\}}_i(s)& \text{ in case (3) },\\\prod_{\substack{i\in I_u\\ m\leq u\leq 5}}P^{\{2\}}_i(s)
&{\text { in case (2).}}
\end{cases}$$

Assume that case $(3)$ occurs and let $\tau\in \l p,m\r.$
By Lemma \ref{paz} and Corollary \ref{remark reduction 2}, if $i\in I_m$, $(p,y)=1$ and
$b_{i,y}\neq 0,$ then $y=x^{r_i}$ and $v_{\tau}(x)=v_\tau(p^m-1).$
Let
$$w=\min\{x\in\bb{N} \mid v_{\tau}(x)=v_{\tau}(p^m-1)~\rm{and}~b_{i,x^{r_i}}\ne 0~\rm{for some $i\in I_m$}\}.$$
By Corollary \ref{remark reduction 2}, for each $i\in I_m$, if $b_{i,w^{r_i}}\ne 0$ then $b_{i,w^{r_i}}<0$.
Moreover if $b_{i,w^{r_i}}\ne 0$ and $X_j\cong X_i,$ then $b_{j,w^{r_j}}\ne 0$, so the set
$\Sigma_m=\{i\in I_m \mid b_{i,w^{r_i}}\ne 0\}$ is infinite.
Applying Proposition \ref{prop 4.3}, we obtain a rational product
$$
H^*(s)=\prod_{i \in \Sigma_m}\left(1+\frac{b_{i,w^{r_i}}}{w^{r_is}}\right),~\rm{where $b_{i,w^{r_is}}<0$~ for all $i\in \Sigma_m$}.
$$
By Propositions \ref{skolem theorem} and \ref{two conditions fulfilled},  $H^*(s)$ is a finite product, i.e. $\Sigma_m$ is finite, which is a contradiction.


Finally assume that case $(2)$ occurs. If $\zeta_p(S)\leq 5,$ then $S$ is one of the following  groups:
$PSL_6(2), U_4(2), PSp_6(2), P\Omega^+_8(2), PSL_3(4), SL_5(2), PSL_4(2), PSL_3(2).$
The explicit description of the Dirichlet series $P_{X,S}^{\{2\}}(s)$ when $S\leq X\leq \aut(S)$ and $S$ is one of the simple groups in the previous list is included in the Appendix. Notice in particular that if $i\in \Lambda=\bigcup_{m\leq 5}I_m$
then $\pi(P^{\{2\}}_i(s)) \subseteq \{3,7,5,31\}.$ First consider $$\Lambda_{31}=\{i\in \Lambda \mid 31 \in \pi(P^{\{2\}}_i(s))\}$$
and let
\begin{eqnarray*}
w & = & \min\{x\in\bb{N}\mid x \text { is odd}, v_{31}(x)=1~\rm{and}~b_{i,x^{r_i}}\ne 0~\rm{for some $i\in \Lambda$}\} \\
  & = &  \min\{x\in\bb{N}\mid x \text { is odd}, v_{31}(x)=1~\rm{and}~b_{i,x^{r_i}}\ne 0~\rm{for some $i\in \Lambda_{31}$}\}.
\end{eqnarray*}
Note that if $i\in\Lambda_{31}$ and $n$ is minimal with the properties that $n$ is odd, $b_{i,n^{r_i}}\ne 0$ and $v_{31}(n)=1$, then $b_{i,n^{r_i}}<0$ (see Appendix). So if $b_{i,w^{r_i}}\ne 0$ then $b_{i,w^{r_i}}<0$; moreover,
by applying Proposition \ref{prop 4.3}, we obtain a rational product
$$
H^*(s)=\prod_{i \in \Lambda}\left(1+\frac{b_{i,w^{r_i}}}{w^{r_is}}\right)=\prod_{i \in \Lambda_{31}}\left(1+\frac{b_{i,w^{r_i}}}{w^{r_is}}\right)
,~\rm{where $b_{i,w^{r_i}}\leq 0$~ for all $i\in \Lambda_{31}$}.
$$
By Propositions \ref{skolem theorem} and \ref{two conditions fulfilled},
the set $\Lambda^*_{31}=\{i\in \Lambda_{31}\mid b_{i,w^{r_i}}\neq 0\}$ is finite, but
this implies that $\Lambda_{31}=\emptyset$. Indeed if $\Lambda_{31}\neq \emptyset$ then
there exists at least one index $i$ with $i\in \Lambda^*_{31}$, moreover by assumption there are infinitely many $j$ with $X_j\cong X_i$ and all of them belong to $\Lambda^*_{31}$. Since $\Lambda_{31}=\emptyset,$
 if $i\in \Lambda$, then $S_i$ is isomorphic to one of the following:
$U_4(2), PSp_6(2), P\Omega^+_8(2), PSL_3(4), PSL_4(2), PSL_3(2)$. It follows from Appendix, that if $i\in \Lambda$,
$x$ is odd and $b_{i,x^{r_i}}\neq 0$, then $v_7(x)\leq 1.$ But then, we may repeat the same argument as above and
consider $\Lambda_{7}=\{i\in \Lambda \mid 7 \in \pi(P^{\{2\}}_i(s))\}$
and
$$
w:= \min\{x\in\bb{N}\mid x \text { is odd}, v_7(x)=1~\rm{and}~b_{i,x^{r_i}}\ne 0~\rm{for some $i\in \Lambda_7$}\}.$$
Arguing as before we deduce that $\Lambda_7=\emptyset.$
We can see from Appendix  that this implies  $S_i\cong U_4(2)$ for all $i\in \Lambda$ and
$$H^{\{5\}}(s)=\prod_{i\in\Lambda}\left(1-\frac{3^{3r_i}}{3^{3r_is}}\right).$$
Again, by Propositions \ref{skolem theorem} and \ref{two conditions fulfilled},  $\Lambda$ is finite and consequently  $\Lambda=\emptyset.$
\end{proof}

\begin{proof}[{Proof of Theorem 2}]
Let $\cal T$ be the set of the almost simple groups $X$ such that
$\soc X$ is a sporadic simple groups and there exist infinitely many $i\in J$ with $X_i\cong X$
and let $I=\{i\in J\mid X_i \in \cal T\}.$
As in the case of Theorem 1,
we have to prove that $I=\emptyset.$ For an almost simple group $X$, let $\Omega(X)$ be the set of the odd integers
$m\in \Bbb N$ such that
\begin{itemize}
\item $X$ contains at least one subgroup $Y$ such that $X=Y\soc X$ and $|X:Y|=m;$
\item if $X=Y\soc X $ and $|X:Y|=m,$ then $Y$ is a maximal subgroup if $X.$
\end{itemize}
Note that if $m \in \Omega(X),$ $X=Y\soc X $ and $|X:Y|=m,$
then $\mu_X(Y)=-1$: in particular
$c_m(X)<0.$ Combined with Lemma \ref{paz}, this implies that if $m\in \Omega(X_i)$ then $b_{i,m^{r_i}}<0.$
Certainly $\Omega(X)$ is not empty and its smallest element is the smallest index $m(X)$ of a supplement of
$\soc X$ in $X$ containing a Sylow 2-subgroup of $X.$ When $S=\soc X$ is a sporadic simple group, the value of $m(X)$
can be read from \cite{atlas}, where, for each of these groups, the list of the maximal subgroups and their indices are given; the precise values are given in Table \ref{prima}. In few cases
we need to know another integer $n(X)$ in $\Omega(X),$ given in Table \ref{seconda}. For a fixed prime $p$, let
$\Lambda_{p}=\{i\in I \mid p \in \pi(P^{\{2\}}_i(s))\}.$

If $i\in \Lambda_{31},$ then $31$ divide $|S_i|$ and $S_i \in \{\rm{J}_4, \rm{Ly}, \rm{O'N}, \rm{BM}, \rm{M},\rm{Th}\}.$
Moreover $31^2$ does not divide $|S_i|$ so if $n$ is odd, divisible by 31 and $b_{i,n}\neq 0$
then $n=x^{r_i}$ and $v_{31}(x)=1.$ Let $m_i=n(S_i)$ if $S_i\cong \rm{Th},$ $m_i=m(S_i)$ otherwise.
Since $m_i$ is the smallest odd number divisible
by $31$ and equal to the index in $X_i$ of a supplement of $S_i$ we get:
\begin{eqnarray*}
w & = &  \min \{ x\in\bb{N}\mid x \text { is odd }, v_{31}(x)=1~\rm{and}~b_{i,x^{r_i}}\ne 0~\rm{for some $i\in I$} \} \\
 & = & \min \{ x\in\bb{N}\mid x \text { is odd }, v_{31}(x)=1~\rm{and}~b_{i,x^{r_i}}\ne 0~\rm{for some $i\in \Lambda_{31}$} \}\\
& = & \min \{m_i \mid i \in \Lambda_{31}\}.
\end{eqnarray*}
But then by Proposition \ref{prop 4.3}, the following Dirichlet series is rational:
$$\prod_{i\in\Lambda_{31}}\left(1+\frac{b_{i,w^{r_i}}}{(w^{r_i})^s}\right).$$
We have $b_{i,w^{r_i}}<0$ if $m_i=w,$ $b_{i,w^{r_i}}=0$ otherwise.
By applying Propositions \ref{skolem theorem} and \ref{two conditions fulfilled}, we get that $\{i \in \Lambda_{31}\mid m_i=w\}$ is a finite set,
and this implies $\Lambda_{31}=\emptyset.$

Now consider  $\Lambda_{23}$. Since $\Lambda_{31}=\emptyset$,
if $i\in\Lambda_{23}$ then $S_i\in \{\rm{M}_{23}$, $\rm{M}_{24}$, $\rm{Co}_1$, $\rm{Co}_2$, $\rm{Co}_3$, $\rm{Fi}_{23}$, ${\rm{Fi}_{24}}^\prime\}$.
We can repeat the argument used to proved that $\Lambda_{31}=\emptyset.$ Let $m_i=n(S_i)$ if $S_i\cong \rm{Co}_1,$ $m_i=m(X_i)$ otherwise
and let $w = \min \{m_i \mid i \in \Lambda_{23}\}.$ By applying Propositions \ref{skolem theorem} and \ref{two conditions fulfilled}, we get that $\{i \in \Lambda_{23}\mid m_i=w\}$ is a finite set, and this implies $\Lambda_{23}=\emptyset.$

New we consider $\Lambda_{11}.$ Since $\Lambda_{31}\cup \Lambda_{23}=\emptyset,$
if $i\in\Lambda_{11}$, then $S_i\in \{{\rm{M}_{11}}$, $\rm{M}_{12}$, $\rm{M}_{22}$, $\rm{J}_1$, $\rm{HS}$, $\rm{Suz}$, $\rm{McL}$, $\rm{HN}$, $\rm{Fi}_{22}\}$. Let $m_i=n(X_i)$ if $S_i\cong \rm{Fi}_{22}$ or $S_i\cong \rm{Fi}^\prime_{24},$ $m_i=m(X_i)$ otherwise
and let $w = \min \{m_i \mid i \in \Lambda_{11}\}.$ As before, by applying Propositions \ref{skolem theorem} and \ref{two conditions fulfilled}, we get that $\{i \in \Lambda_{23}\mid m_i=w\}$ is a finite set, and this implies $\Lambda_{11}=\emptyset.$
Continuing our procedure, we consider $\Lambda_{17}$:
if $i\in\Lambda_{17}$,
then $S_i\in \{\rm{J}_3, \rm{He}\}$ and we can take
$w = \min \{m(X_i) \mid i \in \Lambda_{17}\}$ and  deduce that $\Lambda_{17}=\emptyset.$
Next we take $w = m(\rm{Ru})$ to prove $\Lambda_{29}=\emptyset$
and finally we take $w = m(\rm{J}_2)$ to prove $\Lambda_{7}=\emptyset.$
\end{proof}

\section*{Appendix: exceptional cases}
\addcontentsline{toc}{section}{Appendix: exceptional cases}
In this section, we give explicit formulae for $P_{X,S}^{(2)}(s)$ when $X$ is an almost simple group
whose socle $S$ is of Lie type over a field of characteristic $2$ and $\zeta_2(S)\leq 5$.
We use for this purpose Theorem \ref{reduction of P_GX} and the description of the polynomials $T_{W}(t)$ and $T_{W_J}(t)$ given in \cite[Section 9.4, Section 14.2]{Carter} and in \cite[Section 3]{Patassini} (see in particular Table 1). Just to see an example, consider the case $S=\rm{PSL}_4(2).$
$I=\{1,2,3\}$ is the set of the nodes of the Dynkin diagram. According to \cite[Table 1]{Patassini}
$T_W(2)=(2^4-1)(2^3-1)(2^2-1)=3^2\cdot 5\cdot 7$. Moreover
$T_{W_J}(2)=(2^3-1)(2^2-1)=3\cdot 7$ if $J\in \{\{1,2\},\{2,3\}\},$ $T_{W_J}(2)=(2^2-1)(2^2-1)=3^2$ if $J=\{1,3\},$
$T_{W_J}(2)=(2^2-1)=3$ if $J$ is one of the three subsets of $I$ of cardinality 1, $T_{W_\emptyset}(2)=1.$
Hence
$P_{S}^{(2)}(s)=1-2(3\cdot5)^{(1-s)}-(5\cdot7)^{(1-s)}+3(3\cdot5\cdot7)^{(1-s)}-(3^2\cdot5\cdot7)^{(1-s)}.$

\begin{itemize}
  \item[(i)] $S=\rm{PSL}_6(2)$. If $X$ contains a graph automorphism then
\begin{eqnarray*}
P_{X,S}^{(2)}(s)&=&1-(3^2\cdot7\cdot31)^{(1-s)}-(3\cdot5\cdot7^2\cdot31)^{(1-s)}-(3^3\cdot7\cdot31)^{(1-s)}+ \\
&&+ \ 2(3^4\cdot7^2\cdot31)^{(1-s)}+(3^3\cdot5\cdot7^2\cdot31)^{(1-s)}-(3^4\cdot5\cdot7^2\cdot31)^{(1-s)}.
\end{eqnarray*}
If $X$ does not contain graph automorphisms, then
\begin{eqnarray*}
P_{X,S}^{(2)}(s)&=&1-2(3^2\cdot7)^{(1-s)}-(3^2\cdot5\cdot31)^{(1-s)}-2(3\cdot7\cdot31)^{(1-s)}+ \\
&&+\ 3(3^2\cdot7\cdot31)^{(1-s)}+6(3^2\cdot5\cdot7\cdot31)^{(1-s)}+(3\cdot5\cdot7^2\cdot31)^{(1-s)}-\\
&&-\ 4(3^3\cdot5\cdot7\cdot31)^{(1-s)}-6(3^2\cdot5\cdot7^2\cdot31)^{(1-s)}+\\
&&+\ 5(3^3\cdot5\cdot7^2\cdot31)^{(1-s)}-(3^4\cdot5\cdot7^2\cdot31)^{(1-s)}.
\end{eqnarray*}
  \item[(ii)] $S=\rm{PSL}_5(2)$. If $X$ contains a graph automorphism then
\begin{eqnarray*}
P_{X,S}^{(2)}(s)&=&1-(3\cdot5\cdot31)^{(1-s)}-(3^2\cdot7\cdot31)^{(1-s)}+(3^2\cdot5\cdot7\cdot31)^{(1-s)}.
\end{eqnarray*}
If $X$ does not contain graph automorphisms, then
\begin{eqnarray*}
P_{X,S}^{(2)}(s)&=&1-2(31)^{(1-s)}-2(5\cdot31)^{(1-s)}+3(3\cdot5\cdot31)^{(1-s)}+\\
&&+ \ 3(5\cdot7\cdot31)^{(1-s)} -4(3\cdot5\cdot7\cdot31)^{(1-s)}+(3^2\cdot5\cdot7\cdot31)^{(1-s)}.
\end{eqnarray*}
  \item[(iii)] $S=\rm{PSL}_4(2)$. If $X$ contains a graph automorphism then
\begin{eqnarray*}
P_{X,S}^{(2)}(s)&=&1-(3^2\cdot7)^{(1-s)}-(3\cdot5\cdot7)^{(1-s)}+(3^2\cdot5\cdot7)^{(1-s)}.
\end{eqnarray*}
If $X$ does not contain graph automorphisms, then
\begin{eqnarray*}
P_{X,S}^{(2)}(s)&=&1-2(3\cdot5)^{(1-s)}-(5\cdot7)^{(1-s)}+3(3\cdot5\cdot7)^{(1-s)}-(3^2\cdot5\cdot7)^{(1-s)}.
\end{eqnarray*}
  \item[(iv)] $S=\rm{PSL}_3(2)$. If $X$ contains a graph automorphism then
\begin{eqnarray*}
P_{X,S}^{(2)}(s)&=&1-(3\cdot7)^{(1-s)}.
\end{eqnarray*}
If $X$ does not contain graph automorphisms, then
\begin{eqnarray*}
P_{X,S}^{(2)}(s)&=&1-2(7)^{(1-s)}+(3\cdot7)^{(1-s)}.
\end{eqnarray*}
  \item[(v)] $S=\rm{PSL}_3(4)$. If $X$ contains a graph automorphism then
  $$P_{X,S}^{(2)}(s)=1-(3\cdot5\cdot7)^{(1-s)}.$$
If $X$ does not contain graph automorphisms then
$$P_{X,S}^{(2)}(s)=1-2(3\cdot7)^{(1-s)}+(3\cdot5\cdot7)^{(1-s)}.$$
  \item[(vi)] $S=\rm{PSp}_6(2)$.
   We have
  $$P_{X,S}^{(2)}(s)=1-(3^2\cdot7)^{(1-s)}-(3^3\cdot5)^{(1-s)}-(3^2\cdot5\cdot7)^{(1-s)}+3(3^3\cdot5\cdot7)^{(1-s)}-(3^4\cdot5\cdot7)^{(1-s)}.$$
  \item[(vii)] $S=\rm{U}_4(2)$. We have
  $$P_{X,S}^{(2)}(s)=1-(3^3)^{(1-s)}-(3^2\cdot5)^{(1-s)}+(3^3\cdot5)^{(1-s)}.$$
  \item[(viii)] $S=\rm{P}\Omega^+_8(2)$. We have
\begin{eqnarray*}
P_{X,S}^{(2)}(s) & = & 1-3(3^2\cdot5)^{(1-s)}-(3\cdot5^2\cdot7)^{(1-s)}+3(3^3\cdot5^2)^{(1-s)}+ \\
&&+ \ 3(3^3\cdot5^2\cdot7)^{(1-s)}-4(3^4\cdot5^2\cdot7)^{(1-s)}+(3^5\cdot5^2\cdot7)^{(1-s)}.\end{eqnarray*}

\end{itemize}

\begin{longtable}{|l|l|l|}
\caption{Sporadic simple groups}\label{prima}\\
\hline
$X$ & $|X|$ & $m(X)$\\
\hline
{$\rm{M}_{11}$} & {$2^4\cdot3^2\cdot5\cdot11$} & $11$ \\
\hline
{$\rm{M}_{12}$}  & {$2^6\cdot3^3\cdot5\cdot11$} & $3^2\cdot5\cdot11$ \\
\hline
{$\aut(\rm{M}_{12})$}  & {$2^7\cdot3^3\cdot5\cdot11$} & $3^2\cdot5\cdot11$ \\
\hline
{$\rm{M}_{22}$}  & {$2^7\cdot3^2\cdot5\cdot7\cdot11$} & $7\cdot11$  \\
\hline
{$\aut(M_{22})$}  & {$2^8\cdot3^2\cdot5\cdot7\cdot11$} & $7\cdot11$  \\
\hline
{$\rm{M}_{23}$}  & {$2^7\cdot3^2\cdot5\cdot7\cdot11\cdot23$}  &  $23$  \\
\hline
{$\rm{M}_{24}$}  &  {$2^{10}\cdot3^3\cdot5\cdot7\cdot11\cdot23$} &  $3\cdot11\cdot23$  \\
\hline
{$\rm{J}_1$}  & {$2^3\cdot3\cdot5\cdot7\cdot11\cdot19$} &  $5\cdot11\cdot19$ \\
\hline
{$\rm{J}_2$}  & {$2^7\cdot3^3\cdot5^2\cdot7$} & $3^2\cdot5\cdot7$ \\
\hline
{$\aut(\rm{J}_2)$}  & {$2^8\cdot3^3\cdot5^2\cdot7$} & $3^2\cdot5\cdot7$ \\
\hline
{$\rm{J}_3$} & {$2^7\cdot3^5\cdot5\cdot17\cdot19$} & $3^4\cdot17\cdot19$\\
\hline
{$\aut(\rm{J}_3)$} & {$2^8\cdot3^5\cdot5\cdot17\cdot19$} & $3^4\cdot17\cdot19$\\
\hline
{$\rm{J}_4$}  & {$2^{21}\cdot3^3\cdot5\cdot7\cdot11^3\cdot23\cdot29\cdot31\cdot37\cdot43$} & $11^2\cdot29\cdot31\cdot37\cdot43$\\
\hline
{$\rm{HS}$}  & {$2^9\cdot3^2\cdot5^3\cdot7\cdot11$} & $3\cdot5^3\cdot11$\\
\hline
{$\aut(\rm{HS})$}  & {$2^{10}\cdot3^2\cdot5^3\cdot7\cdot11$} & $3\cdot5^3\cdot11$\\
\hline
{$\rm{Suz}$} & {$2^{13}\cdot3^7\cdot5^2\cdot7\cdot11\cdot13$} & $3^3\cdot5\cdot7\cdot11\cdot13$ \\
\hline
{$\aut(\rm{Suz})$} & {$2^{14}\cdot3^7\cdot5^2\cdot7\cdot11\cdot13$} & $3^3\cdot5\cdot7\cdot11\cdot13$ \\
\hline
{$\rm{McL}$}  & {$2^7\cdot3^6\cdot5^3\cdot7\cdot11$} & $5^2\cdot11$  \\
\hline
{$\aut(\rm{McL}$)}  & {$2^8\cdot3^6\cdot5^3\cdot7\cdot11$} & $5^2\cdot11$  \\
\hline
{$\rm{Ru}$}  & {$2^{14}\cdot3^3\cdot5^3\cdot7\cdot13\cdot29$} & $3^2\cdot5^3\cdot13\cdot29$  \\
\hline
{$\rm{He}$}  & {$2^{10}\cdot3^3\cdot5^2\cdot7^3\cdot17$} & $5\cdot7^3\cdot17$  \\
\hline
{$\aut(\rm{He})$}  & {$2^{11}\cdot3^3\cdot5^2\cdot7^3\cdot17$} & $3^2\cdot5^2\cdot7^2\cdot17$  \\
\hline
{$\rm{Ly}$}  & {$2^8\cdot3^7\cdot5^6\cdot7\cdot11\cdot31\cdot37\cdot67$} &  $5^3\cdot31\cdot37\cdot67$  \\
\hline
{$\rm{O'N}$} & {$2^9\cdot3^4\cdot5\cdot7^3\cdot11\cdot19\cdot31$} & $3^2\cdot7^2\cdot11\cdot19\cdot31$ \\
\hline
{$\aut(\rm{O'N})$} & {$2^{10}\cdot3^4\cdot5\cdot7^3\cdot11\cdot19\cdot31$} & $3^2\cdot7^2\cdot11\cdot19\cdot31$ \\
\hline
{$\rm{Co}_1$}  & {$2^{21}\cdot3^9\cdot5^4\cdot7^2\cdot11\cdot13\cdot23$} & $3^6\cdot5^3\cdot7\cdot13$  \\
\hline
{$\rm{Co}_2$}  & {$2^{18}\cdot3^6\cdot5^3\cdot7\cdot11\cdot23$} & $3^4\cdot5^2\cdot23$   \\
\hline
{$\rm{Co}_3$}  & {$2^{10}\cdot3^7\cdot5^3\cdot7\cdot11\cdot23$} & $3^3\cdot5^2\cdot11\cdot23$  \\
\hline
{$\rm{Fi}_{22}$}  & {$2^{17}\cdot3^9\cdot5^2\cdot7\cdot11\cdot13$} & $3^7\cdot5\cdot13$  \\
\hline
{$\aut(\rm{Fi}_{22})$}  & {$2^{18}\cdot3^9\cdot5^2\cdot7\cdot11\cdot13$} & $3^7\cdot5\cdot13$  \\
\hline
{$\rm{Fi}_{23}$}  & {$2^{18}\cdot3^{13}\cdot5^2\cdot7\cdot11\cdot13\cdot17\cdot23$} & $3^4\cdot17\cdot23$ \\
\hline
{$\rm{Fi}^\prime_{24}$}  & {$2^{21}\cdot3^{16}\cdot5^2\cdot7^3\cdot11\cdot13\cdot17\cdot23\cdot29$} & $3^{13}\cdot5\cdot7^2\cdot13\cdot17\cdot29$\\
\hline
{$\aut(\rm{Fi}^\prime_{24})$}  & {$2^{22}\cdot3^{16}\cdot5^2\cdot7^3\cdot11\cdot13\cdot17\cdot29$} & $3^{13}\cdot5\cdot7^2\cdot13\cdot17\cdot29$\\
\hline
{$\rm{HN}$} & {$2^{14}\cdot3^6\cdot5^6\cdot7\cdot11\cdot19$} & $3^4\cdot5^4\cdot7\cdot11\cdot19$ \\
\hline
{$\aut(\rm{HN})$} & {$2^{15}\cdot3^6\cdot5^6\cdot7\cdot11\cdot19$} & $3^4\cdot5^4\cdot7\cdot11\cdot19$ \\
\hline
{$\rm{Th}$}  & {$2^{15}\cdot3^{10}\cdot5^3\cdot7^2\cdot13\cdot19\cdot31$} & $3^8\cdot5^2\cdot7\cdot13\cdot19$\\
\hline
{$\rm{BM}$} & {$2^{41}\cdot3^{13}\cdot5^6\cdot7^2\cdot11\cdot13\cdot17\cdot19\cdot23\cdot31\cdot47$} & $3^7\cdot5^3\cdot7\cdot13\cdot 17\cdot19\cdot31\cdot47$ \\
\hline
\multirow{2}{*}{$\rm{M}$}  & {$2^{46}\cdot3^{20}\cdot5^9\cdot7^6\cdot11^2\cdot13^3\cdot17\cdot19\cdot23\cdot29$} &
{$3^{11}\cdot5^5\cdot7^4\cdot11\cdot13^2\cdot17\cdot 19\cdot$}\\
& {$\cdot31\cdot41\cdot 47\cdot59\cdot71$} & {$\cdot29\cdot31\cdot41\cdot47\cdot59\cdot71$}\\
\hline
\end{longtable}

\begin{longtable}{|l|l|l|}
\caption{$n(X)$}\label{seconda}\\
\hline
$X$ &  $n(X)$\\
\hline
{$\rm{Co}_1$}  & {$3^4\cdot5^2\cdot7\cdot11\cdot13\cdot23$}   \\
\hline
{$\rm{Fi}_{22}$}  & {$3^5\cdot5\cdot7\cdot11\cdot13$}  \\
\hline
{$\rm{Fi}^\prime_{24}$}  &  $3^{9}\cdot5\cdot 11\cdot7^2\cdot13\cdot17\cdot 23\cdot29$\\
\hline
{$\aut(\rm{Fi}^\prime_{24})$}   & $3^{9}\cdot5\cdot 11\cdot7^2\cdot13\cdot17\cdot 23\cdot29$\\
\hline
{$\rm{Th}$}  & $3^8\cdot 5^2\cdot 7\cdot 13 \cdot 19\cdot 31$\\
\hline
\end{longtable}

\bibliographystyle{alpha}

\addcontentsline{toc}{section}{References}
\end{document}